\newcolumntype{L}{>{\centering\arraybackslash}m{0.46\textwidth}}
\def\Eq#1{(\ref{eq:#1})}
\newtheorem{proposition}{Proposition}[section]
\def\myKeywords{Space-time parallelism, domain decomposition, BDDC, preconditioning, scalability}
\def\Eq#1{(\ref{eq:#1})}
\def\Ac{{\bar{\mathcal{A}}}}
\def\Ad{{{\mathcal{A}}}}
\def\V{{V}}
\def\W{{\mathcal{W}}}
\def\VbddcC{\widetilde{{V}}_{\rm c}}
\def\Vc{\bar{{V}}}
\def\Vd{{V}}
\def\Vl{{V}_\sbx}
\def\Vbddc{\widetilde{{V}}}
\def\VbddcF{{\widetilde{{V}}_{\rm f}}}
\def\VbddcF1{{\widetilde{{V}}_{\rm f}}}
\def\VbddcC1{{\widetilde{{V}}_{\rm c}}}
\def\uf{\boldsymbol{u}}
\def\E{\mathcal{E}}
\def\ne{K}
\def\ns{N}
\def\ie{k}
\def\is{n}
\def\nes{{\ne_\is}}
\def\sbxt{{\boldsymbol{\omega}_\is}}
\def\sbx{\omega}
\def\Vlxt{\boldsymbol{V}_{\sbxt}}
\def\Vxt{\boldsymbol{V}}
\def\Vcxt{{ \bar{\boldsymbol{V}}}}
\def\Vlx{{{V}_{\sbx}}}
\def\Vx{{V}}
\def\uxt{{\boldsymbol{u}}}
\def\ulxt{{\uxt_\sbxt}}
\def\lxt{{\boldsymbol{l}}}
\def\llxt{{\lxt_\sbxt}}
\def\vxt{{\boldsymbol{v}}}
\def\ulxt{{\uxt_\sbxt}}
\def\vlxt{{\vxt_\sbxt}}
\def\ulx{{u_\sbx}}
\def\vlx{{v_\sbx}}
\def\elt{{\delta_\ie}}
\def\elsbt{{\delta_\is^\ie}}
\def\sbt{{\Delta_\is}}
\def\M{{\mathcal{M}}}
\def\A{{\mathcal{A}}}
\def\Ml{{\M_\sbx}}
\def\Al{{\A_\sbx}}
\def\Axt{{\boldsymbol{\A}}}
\def\Alxt{{\Axt_\sbxt}}
\def\C{{\mathcal{C}}}
\def\Cxt{{\boldsymbol{\C}}}
\def\Clxt{{\Cxt_\sbxt}}
\def\Acxt{{\bar{\boldsymbol{\A}}}}
\def\sumsb{{\sum_{\ie = 1}^\nes} }
\def\K{{\mathcal{K}}}
\def\Kl{{\K_\sbx}}
\def\Mc{{\bar{\M}}}
\def\Kc{{\bar{\K}}}
\def\Ac{{\bar{\A}}}
\def\fxt{{\boldsymbol{f}}}
\def\lxt{{\boldsymbol{l}}}
\def\Vbddcxt {{\widetilde \Vxt}}
\def\Wx{{\mathcal{W}}}
\def\Wlx{{\Wx_\sbx}}
\def\Wlxt{{\boldsymbol{\Wx}_{\sbxt}}}
\def\Wxt{{\boldsymbol{\Wx}}}
\def\Ex{{\mathcal{E}}}
\def\Ext{{\boldsymbol{\Ex}}}
\def\Qx{{\mathcal{Q}}}
\def\Qxt{{\boldsymbol{\Qx}}}
\def\Bx{{\mathcal{B}}}
\def\Bxt{{\boldsymbol{\Bx}}}
\def\rxt{{\boldsymbol{r}}}
\def\sxt{{\boldsymbol{s}}}
\def\Vd{{V}}
\def\Vc{\bar{{V}}}
\def\Vd{{V}}
\def\Q{{\mathcal{Q}}}
\def\B{{\mathcal{B}}}
\def\Phixt{{\boldsymbol{\Phi}}}
\def\Philxt{{\boldsymbol{\Phi}_\sbxt}}
\def\Psixt{{\boldsymbol{\Psi}}}
\def\Psilxt{{\boldsymbol{\Psi}_\sbxt}}
\title{{\TheTitle}\thanks{This work has been funded by the European Research Council under the FP7 Programme Ideas through the Starting Grant No. 258443 - COMFUS: Computational Methods for Fusion Technology and the FP7 NUMEXAS project under grant agreement 611636. S. Badia gratefully acknowledges the support received from the Catalan Government through the ICREA Acad\`emia Research Program. M. Olm also acknowledges the support from the Ag\`{e}ncia de Gesti\'{o} i d'Ajuts Universitaris i de Recerca, under the FI-AGAUR 2015 grant. We acknowledge the North-German Supercomputing Alliance (HLRN) and the Barcelona Supercomputing Center (BSC) for providing computing time.}}
\DeclareMathOperator{\diag}{diag}
\title[]{Space-time balancing domain decomposition }
\author{ Santiago Badia $^{\dag\ddag}$  \and Marc Olm $^{\dag\ddag}$ }
\thanks{
	$^\dag$ Centre Internacional de M\`etodes Num\`erics en Enginyeria (CIMNE)
	\@, Parc
	Mediterrani de la Tecnologia, UPC, Esteve Terradas 5, 08860 Castelldefels,
	Spain (\{sbadia,molm\}@cimne.upc.edu). \\ 
	\indent$^\ddag$
	Universitat Polit\`ecnica de Catalunya, Jordi Girona 1-3, Edifici C1, 08034
	Barcelona, Spain.
}
\begin{document}

\begin{abstract}
In this work, we propose two-level space-time domain decomposition preconditioners for parabolic problems discretized using finite elements. They are motivated as an extension to space-time of balancing domain decomposition by constraints preconditioners. The key ingredients to be defined are the sub-assembled space and operator, the coarse degrees of freedom (DOFs) in which we want to enforce continuity among subdomains at the preconditioner level, and the transfer operator from the sub-assembled to the original finite element space. With regard to the sub-assembled operator, a perturbation of the time derivative is needed to end up with a well-posed preconditioner. The set of coarse DOFs includes the time average (at the space-time subdomain) of classical space constraints plus new constraints between consecutive subdomains in time. Numerical experiments show that the proposed schemes are weakly scalable in time, i.e., we can efficiently exploit increasing computational resources to solve more time steps in the same {total elapsed} time. Further, the scheme is also weakly space-time scalable, since it leads to asymptotically constant iterations when solving larger problems both in space and time. Excellent {wall clock} time weak scalability is achieved for space-time parallel solvers on some thousands of cores.
\end{abstract}

\maketitle

\noindent{\bf Keywords:} 
\myKeywords

\tableofcontents

\pagestyle{myheadings}
\thispagestyle{plain}

\section{Introduction} \label{intro}

At the beginning of the next decade supercomputers are expected to reach a peak performance of one exaflop/s, which implies a 100 times improvement with respect to current supercomputers. This improvement will not be based on faster processors, but on a much larger number of processors (in a broad sense). This situation will certainly have an impact in large scale computational science and engineering. Parallel algorithms will be required to exhibit much higher levels of concurrency, keeping good scalability properties. 

In mesh-based implicit simulations, e.g., finite element (FE), finite volume, or finite difference methods, one ends up with a linear system to be solved. The linear system solve is a bottleneck of the simulation pipeline and weakly scalable algorithms require complex mathematical approaches, like algebraic multigrid (AMG) or multilevel domain decomposition (DD) techniques. When dealing with transient problems, since information always moves forward in time, one can exploit sequentiality. At every time step one has to solve a spatial problem before proceeding to the next step and parallelism can be exploited at the linear system solve. Although parallel-in-time methods are becoming popular, the sequential-in-time approach is the standard procedure in scientific simulations. However, the tremendous amounts of parallelism to be exploited in the near future certainly motivates to change this paradigm, since further concurrency opportunities must be sought.

In transient simulations, a natural way to go is to exploit concurrency not only in space but also in time. The idea is to develop space-time solvers that do not exploit sequentiality (at least at the global level) and thus provide the solution in space at all time values in one shot. Space-time parallelism is a topic that is receiving rapidly increasing attention. Different iterative methods have been considered so far. One approach is to use the Parareal method \cite{parareal}, which is a time-only parallel algorithm, combined with a parallel space preconditioner (see, e.g., \cite{gander_parareal_2013}). Another space-time algorithm is PFASST \cite{pfasst,minion_interweaving_2015}, which combines a spectral deferred correction time integration with a nonlinear multigrid spatial solver; the viability of the PFASST method has been proved in \cite{speck_space-time_2013} at JUQUEEN. Weakly scalable space-time AMG methods can be found in \cite{gander_analysis_2014,weinzierl_geometric_2012,falgout_parallel_2014}. 

The multilevel balancing DD by constraints (BDDC) preconditioner \cite{mandel_multispace_2008,tu_three-level_2007} has recently been proved to be an excellent candidate for extreme scale simulations in \cite{badia_multilevel_2016}, where a recursive implementation that permits overlapping among communication and computation at all levels has scaled up to almost half a million cores and two million subdomains (MPI tasks), for both structured and unstructured meshes with tens of billions of elements. The key ingredient of these methods relies on the definition of a FE space with relaxed inter-element continuity. These spaces are defined by choosing the quantities to be continuous among processors, i.e., the coarse degrees of freedom (DOFs) \cite{dohrmann_preconditioner_2003}. As far as we know, scalable DD methods in space-time have not been considered so far. 

In this work, we develop \emph{weakly scalable space-time preconditioners based on BDDC methods}. In order to do that, we extend the key ingredients in the space-parallel BDDC framework, namely the sub-assembled space and operator, coarse DOFs, and transfer operators, to space-time. We prove that the resulting method only involves a set of well-posed problems, and time causality can still be exploited at the local level. We have solved a set of linear and nonlinear problems that show the excellent weak scalability of the proposed preconditioners. 

The outline of the article is as follows. In Sect. \ref{sec:problem_setting} we set the problem and introduce notation. In Sect. \ref{sec:BDDC} we introduce the classical space-parallel BDDC preconditioners. In Sect. \ref{sec:STBDDC} we develop space-time BDDC (STBDDC) preconditioners. In Sect. \ref{sec:num_exp} we present a detailed set of numerical experiments showing the scalability properties of the proposed methods. Finally, in Sect. \ref{sec:conclusions} we draw some conclusions.

\section{Problem setting}\label{sec:problem_setting}

In this section, we introduce the problem to be solved, the partition of the domain in space and time, and the  space and time discretization. In the sequel, calligraphic letters are used for operators. $\mathcal{M}$ denotes mass matrix operators related to the time derivative discretization, $\mathcal{K}$ is used for the rest of terms in the PDE operator, and $\mathcal{A}$ is used for the sum of these two operators. Given an operator $\mathcal{A}$, we will use the notation $\mathcal{A}(u,v) \doteq \langle \mathcal{A} u , v \rangle $. Uppercase letters $(V,\ldots)$ are used for (FE-type) functional spaces whereas functions are represented by lowercase letters $(v,\ldots)$. We use classical functional analysis notation for Sobolev spaces. 

\subsection{Domain partitions}
We consider a bounded space-time domain $\Omega \times (0,T]$, where $\Omega$ is an open polyhedral domain $\Omega \subset \mathbb{R}^d$, $d$ being the space dimension. Let us construct two partitions of $\Omega$, a \emph{fine} partition into \emph{elements} and a \emph{coarse} partition into \emph{subdomains}. The partition of $\Omega$ into elements is represented by $\theta$. In space, elements $e \in \theta$ are tetrahedra/hexahedra for $d=3$ or triangles/quadrilaterals for $d=2$. The \emph{coarse} partition $\Theta$ of $\Omega$ into subdomains is obtained by \emph{aggregation of elements} in $\theta$, i.e., there is an element partition $\theta_\omega \doteq \{ e \in \theta : e \subset \omega \} \subset \theta$ for any $\omega\in\Theta$.  The interface of the subdomain partition is $\Gamma \doteq \cup_{\omega \in \Theta} \partial \omega \setminus \partial \Omega$. 

For the time interval $(0,T]$, we define a time partition $\{0 = t^0 , t^1, \ldots, t^\ne = T\}$ into $\ne$ time elements. We denote the $\ie$-th element by  $\elt \doteq (t_{\ie-1},t_{\ie}]$, for $\ie = 1, \ldots, \ne$.

\subsection{Space-time discretization}

Let us consider as a model problem the following transient convection-diffusion-reaction (CDR) equation: find $u \in H^1(\Omega)$ such that
\begin{align}\label{eq:cd}
&\partial_t u - \nabla \cdot \nu \nabla u + \beta \cdot \nabla u + \sigma u = f \quad \hbox{on } \, \Omega, \quad \hbox{almost everywhere in } (0,T], \\
&u = g   \quad \hbox{in } \, \partial \Omega, \qquad  u(0, x)= u^0,      \nonumber
\end{align} 
where $\nu$ and $\sigma$ are positive constants, $\beta \in \mathbb{R}^d$, and $f \in H^{-1}(\Omega)$.  We supplement this system with the initial condition $u(0) = u^0$. Homogeneous Dirichlet data is assumed for the sake of clarity in the exposition, but its extension to the general case is obvious. {Besides, let us consider  $\beta=0$ and $\sigma=0$ for simplicity in the exposition of the algorithm.}  In Sect. \ref{sec:num_exp} we will take a nonlinear viscosity $\nu(u) = \nu_0 | \nabla u|^p$ (with $p \geq 0$ and $\nu_0 > 0$), i.e., the transient $p$-Laplacian problem.

For the space discretization, we use $H^1$-conforming FE  spaces on conforming meshes with strong imposition of Dirichlet conditions. The discontinuous Galerkin (DG) case will not be considered in this work, but we refer to \cite{dryja_bddc_2007} for the definition of BDDC methods for DG discretizations. 
We define $\Vc \subset H_0^1(\Omega)$ as the global FE space related to the FE mesh $\theta$. Further, we define the FE-wise operators:
\begin{align*}
\M_e(u,v) & \doteq \int_e u v , \quad \K_e(u,v) \doteq \int_e \nu \nabla u \cdot \nabla v , \\  \A_e(u,v) & \doteq \M_e(u,v) + | \elt | \K_e(u,v). 
\end{align*}
The global FE problem $\Ac : \Vc \rightarrow \Vc$ can be written as the sum of element contributions, i.e.,
$$
\Ac(u,v) \doteq \sum_{e \in \theta} \A_e(u,v), \qquad \hbox{for } u, v \in \Vc. \qquad \hbox{(Analogously for  $\Mc$ and $\Kc$.)}
$$
In time, we make use of a collocation-type method of lines. For the sake of clarity, we will use the Backward-Euler time integration scheme. In any case, the resulting preconditioner can readily be applied to any $\theta$-method or Runge-Kutta method. We are interested in solving the following fully discrete system: given $u(t^0) = 0$, find at every time step $\ie = 1, \ldots, \ne$ the solution $u(t^\ie) \in \Vc$ of
\begin{equation}\label{eq:original_problem}
 \Ac u(t^\ie) =\Mc u(t^\ie)  + | \elt | \Kc u(t^\ie)  = \bar{g}(t^\ie), \qquad \hbox{ for any } v \in \Vc,
\end{equation}
with $\bar{g}(t^\ie) \doteq| \elt |  \bar{f}(t^\ie) + \Mc u(t^{\ie-1})  \in \Vc'$, where  $\Vc'$ denotes the dual space of $\Vc$. 
Non-homogeneous boundary conditions can be enforced by simply modifying the right-hand side at $t^1$, i.e., $\bar{g}(t^1) \doteq| \elt |  \bar{f}(t^1) + \Mc u^0  \in \Vc'$. Such imposition of boundary conditions, i.e., by enforcing homogeneous conditions in the FE space plus the modification of the right-hand side, is better suited for the space-time framework. (We note that this is the way strong Dirichlet boundary conditions are imposed in FE codes.)

\section{Space BDDC preconditioning}\label{sec:BDDC}

In this section, we present first a parallel solver for the transient problem \Eq{original_problem}, which combines a sequential-in-time approach with a space-parallel BDDC preconditioned Krylov solver at every time step \cite{dohrmann_preconditioner_2003}. It will serve to introduce space-parallel BDDC methods and related concepts that will be required in the space-time section. BDDC preconditioners involve the definition of three key ingredients: (1) a sub-assembled problem that involves independent subdomain corrections, (2) a set of coarse DOFs and the corresponding subspace of functions with continuous coarse DOFs among subdomains, and (3) the interior correction and transfer operators. Let us elaborate these ingredients.

\subsection{Sub-assembled problem}

Non-overlapping DD preconditioners rely on the definition of a sub-assembled FE problem, in which contributions between subdomains have not been assembled. In order to do so, at every subdomain $\omega \in \Theta$, we consider the FE space $\Vd_\omega$ associated to the element partition $\theta_\omega$ with homogeneous boundary conditions on $\partial\omega \cap \partial \Omega$. One can define the subdomain operator $
\Al(u,v) = \sum_{e \in \theta_\omega} \A_e(u,v)$, for $u, v \in \Vl$. (Analogously for  $\Ml$ and $\Kl$.)

Subdomain spaces lead to the sub-assembled space of functions $\V \doteq \Pi_{\omega \in \Theta} \V_\omega$. For any $u \in \Vd$, {we define its restriction to a subdomain $\omega \in \Theta$ as $u_\omega$}. Any function $u \in \Vd$ can be represented by  its unique decomposition into subdomain functions as $\{u_\omega \in \Vlx\}_{\omega \in \Theta}$. 
 We also define the sub-assembled operator $\mathcal{A}(u,v) \doteq \Pi_{\omega \in \Theta} {\mathcal{A}}_\omega (u_\sbx,v_\sbx)$. (Analogously for $\M$ and $\K$.) 

With these definitions,  $\Vc$ can be understood as the subspace of functions in $\Vd$ that are continuous on the interface $\Gamma$, and $\bar{\mathcal{A}}$ can be interpreted as the Galerkin projection of $\mathcal{A}$ onto $\Vc$. We note that $\theta$ and the FE type defines $\Vc$, whereas $\Theta$ is also required to define the local  spaces $\{\V_\omega\}_{\omega \in \Theta}$ and the sub-assembled space $\Vd$, respectively.

At this point, we can state the following sub-assembled problem: given $g \in \Vx'$, find $u \in \Vx$ such that 
\begin{align}\label{eq:sub-assembled_space}
& \Al \ulx = \Ml \ulx +  | \elt | \Kl \ulx  = g_\sbx, \quad \sbx \in \Theta.
\end{align}
With the previous notation, we can write the sub-assembled problem in a compact manner as $\Ad u = g$. 

\subsection{Coarse DOFs}

A key ingredient in DD preconditioners is to classify the set of nodes of the FE space $\Vc$. The interface $\partial e$ of every FE in the mesh $\theta$ can be decomposed into vertices, edges, and faces. By a simple classification of these entities, based on the set of subdomains that contain them, one can also split the interface $\Gamma$ into faces, edges, and vertices (at the subdomain level), that will be called geometrical objects. We represent the set of geometrical objects by $\Lambda$. In all cases, edges and faces are open sets in their corresponding dimension. By construction, faces belong to two subdomains and edges belong to more than two subdomains in three-dimensional problems. This classification of $\Omega$ into objects automatically leads to a partition of interface DOFs into DOF objects, due to the fact that every DOF in a FE does belong to only one geometrical entity.  
These definitions are heavily used in DD preconditioners (see, e.g., \cite[p.~88]{Toselli2005}). 

Next, we associate to some (or all) of these geometrical objects a \emph{coarse} DOF. In BDDC methods, we usually take as coarse DOFs mean values on a subset of objects $\Lambda_O$. Typical choices of $\Lambda_O$ are $\Lambda_O \doteq \Lambda_{C}$, when only corners are considered, $\Lambda_O \doteq \Lambda_C \cup \Lambda_{E}$, when corners and edges are considered, or $\Lambda_O \doteq \Lambda$, when corners, edges, and faces are considered. These choices lead to three common variants of the BDDC method referred as BDDC(c), BDDC(ce) and BDDC(cef), respectively. This classification of DOFs into objects can be restricted to any subdomain $\omega \in \Theta$, leading to the set of subdomain objects $\Lambda_O(\omega)$.

With the classification of the interface nodes and the choice of the objects in $\Lambda_O$, we can define the coarse DOFs and the corresponding BDDC space. Given an object $\lambda \in \Lambda_O(\omega)$, let us define its restriction operator $\tau_\lambda^\omega$ on a function $u \in \Vl$ as follows: $\tau_\lambda^\omega(u)(\xi) = u(\xi)$ {for  } a node $\xi$ that belongs to the geometrical object $\lambda$, and zero otherwise. We define the BDDC space $\Vbddc \subset \Vx$ as the subspace of functions $v \in \Vx$ such that the constraint
\begin{equation}\label{eq:coarse-dofs}
\int_{\lambda} \tau_\lambda^\sbx(\vlx)  \qquad \hbox{is identical for all } \sbx \in {\rm neigh}(\lambda){,}  
\end{equation}
{where ${\rm neigh}(\lambda)$ stands for the set of subdomains that contain the object $\lambda$.} (The integral on $\lambda$ is just the value at the vertex, when $\lambda$ is a vertex.) Thus, every $\lambda \in \Lambda_O$ defines a coarse DOF value \Eq{coarse-dofs} that is continuous among subdomains. Further, we can define the BDDC operator $\widetilde \Ad$ as the Galerkin projection of $\Ad$ onto $\Vbddc$. 

\subsection{Transfer operator}

The next step is to define a transfer operator from the sub-assembled space $\Vd$ to the continuous space $\Vc$. The transfer operator is the composition of a weighting operator and a harmonic extension operator.

\begin{enumerate}

\item The weighting operator $\mathcal{W}$ takes a function $u \in \Vd$ and computes mean values on interface nodes, i.e.,
\begin{equation}\label{eq:weighting}
\mathcal{W}u(\xi) \doteq \frac{\sum_{\omega \in {\rm neigh}(\xi)}u_\omega(\xi)}{|{\rm neigh}(\xi)|}, 
\end{equation} at every node $\xi$ of the FE mesh $\theta$, where ${\rm neigh}(\xi)$ stands for the set of subdomains that contain the node $\xi$. It leads to a continuous function $\mathcal{W}u \in \Vc$. It is clear that this operator only modifies the DOFs on the interface. Other choices can be defined for non-constant physical coefficients. 

\item Next, let us define the bubble space $\V_0 \doteq \{ v \in \V: v = 0 \hbox{ on } \Gamma \}$ 
and the Galerkin projection $\Ad_0$ of $\Ad$ onto $\Vx_0$. {We also define the trivial injection $\mathcal{I}_0$ from $\V_0$ to $\Vc$. The \emph{harmonic} extension reads as $\mathcal{E} v \doteq (I - \mathcal{I}_0 \Ad_0^{-1} \mathcal{I}_0^T \Ac) v$.} The computation of $\Ad_0^{-1}$ involves to solve problem \Eq{sub-assembled_space} with homogeneous boundary conditions on $\Gamma$.
This operator corrects interior DOFs only.
\end{enumerate}
The transfer operator $\Q : \Vd \rightarrow \Vc$ is defined as $\Q \doteq \E \Wx$. 

\subsection{Space-parallel preconditioner}

With all these ingredients, we are now in position to define the BDDC preconditioner. This preconditioner is an additive Schwarz preconditioner (see, e.g. \cite[chap.~2]{Toselli2005}), with corrections in $\V_0$ and the BDDC correction in $\Vbddc$ with the transfer $\Q$. As a result, the BDDC preconditioner reads as:
{
  \begin{equation}\label{eq:bddc}
\B \doteq \mathcal{I}_0 \A_0^{-1} \mathcal{I}_0^T+ \Q \widetilde \A^{-1} \Q^T.
  \end{equation}
  }

\section{Space-time BDDC preconditioning}\label{sec:STBDDC}
As commented in Sect. \ref{intro}, the huge amounts of parallelism of future supercomputers will require to seek for additional concurrency. In the simulation of \Eq{original_problem} using the space-parallel preconditioner \Eq{bddc}, we are using a sequential-in-time approach by exploiting time causality. The objective of this section is to solve \Eq{original_problem} at all time steps in one shot, by using a space-time-parallel preconditioner and a Krylov subspace method for non-symmetric problems. In order to do so, we want to extend the BDDC framework to space-time. 

In the following, we will use bold symbols, e.g., $\boldsymbol{u}$, $\boldsymbol{\V}$, or $\boldsymbol{\mathcal{A}}$, for space-time functions, functional spaces, and operators, respectively. $\boldsymbol{I}$ is the identity matrix, which can have different dimension in different appearances.

First, we must start with a space-time partition of $\Omega \times (0,T]$. We consider a time subdomain partition by aggregation of time elements, $\{0 = T_0, T_1, \ldots, T_\ns = T \}$ into $\ns$ time subdomains.  We denote the $\is$-th subdomain as  $\sbt \doteq (T_{\is-1},T_{\is}]$, for $\is = 1, \ldots, \ns$. By definition, $\sbt$ admits a partition into $\ne_\is$ time elements $\{T_{\is-1}= t^0_\is, \ldots, t^{\ne_\is}_\is =T_\is \}$. Next, we define the space-time subdomain partition as the Cartesian product of the space subdomain partition $\Theta$ and the time subdomain partition defined above; for every space subdomain $\omega$ and time subdomain $\sbt$, we have the space-time subdomain $\sbxt \doteq \sbx \times \sbt$. 

 The global space of continuous space-time functions in which we want to solve \Eq{original_problem} is {the FE space $\Vc$ of spatial functions times $K+1$ time steps, i.e., } $\Vcxt \doteq [\Vc]^{\ne+1}$, constrained to zero initial condition. Thus, by definition, $\uxt \in \Vcxt$ can be expressed as $\uxt = ( u(t^0) = 0, u(t^1), \ldots,  u(t^\ne))$, and the original problem \Eq{original_problem} (for all time step values) can be stated in a compact manner as 
\begin{equation}\label{eq:op_compact}
\Acxt \uxt = \bar{\boldsymbol{f}}, \qquad \hbox{in } \Vcxt.
\end{equation}
In order to define the STBDDC preconditioner for \Eq{op_compact}, we will use the same structure as above, extending the three ingredients in Sect. \ref{sec:BDDC} to the space-time case.

\subsection{Sub-assembled problem}

Using the space-time partition above, the trial (and test) space for the local space-time subdomain $\sbxt$ is $\Vlxt \doteq [ \Vlx ]^{\nes+1}$. Thus, by definition, $\ulxt \in \Vlxt$ can be expressed as $\ulxt = (\ulx(t_\is^0), \ldots, \ulx(t_\is^\nes))$. Analogously, the sub-assembled space is $\Vxt \doteq \Pi_{\is = 1}^\ns \Pi_{\omega \in \Theta} \Vlxt$. Let us note that, using these definitions, functions in $\Vxt$ have duplicated values at $T_\is$, for $\is = 1, \ldots, \ns-1$. The global space of continuous space-time functions  $\Vcxt$ can be understood as the subspace of functions in $\Vxt$ that are continuous on the space-time interface $\Gamma \times T_\is$, for $\is = 1, \ldots, \ns-1$. 

Now, we propose the following sub-assembled problem in $\Vxt$: find the solution $\uxt \in \Vxt$ such that, at every $\sbxt$ in the space-time partition, it satisfies the space-time problem
\begin{align}\label{eq:space-time-problem}
&\sumsb  \left\{\Ml(\ulx(t_\is^\ie) - \ulx(t_\is^{\ie-1}), \vlx(t_\is^\ie) ) +  | {\elsbt} | \Kl (\ulx(t_\is^\ie), \vlx(t_\is^\ie)) \right\} \\
 & + \frac{{(1 - \rm Kr}_{1,\is})}{2} \Ml( \ulx(t_\is^0), \vlx(t_\is^0) )  - \frac{{(1 - \rm Kr}_{\ns,\is})}{2} \Ml ( \ulx(t_\is^\nes) , \vlx(t_\is^\nes) )  \nonumber \\ & = \sumsb  | {\elsbt} | f_\sbx ( t_\is^\ie ) (\vlx(t_\is^\nes)) , \nonumber
\end{align}
for any $\vxt \in \Vxt$, where ${\rm Kr}_{i,j}$ is the Kronecker delta. {Note that the perturbation terms in the second line of \Eq{space-time-problem} are introduced only on time interfaces, i.e., in the first and last time steps of the time subinterval, as long as the corresponding time step is not a time domain boundary. For subdomains with $n=1$, and thus $t_\is^0 = 0$, the first stabilization term vanishes. Analogously, the second stabilization term vanishes for $n = N$ and $t_\is^\nes = T$.} We can write \Eq{space-time-problem} in compact manner as 
\begin{equation}\label{eq:sp_compact}
\Axt \uxt = \fxt \qquad \hbox{in } \Vxt.
\end{equation} 
The motivation of the perturbation terms is to have a positive semi-definite sub-assembled operator. In any case, the perturbation terms are such that, after inter-subdomain assembly, we recover the original space, i.e., $\Axt$ is in fact a sub-assembled operator with respect to $\Acxt$, {since interface perturbations between subdomains cancel out.} We prove that these properties hold.
\begin{proposition}\label{perturbation}
The Galerkin projection of the sub-assembled space-time problem \Eq{sp_compact} onto $\Vcxt$ reduces to the original problem \Eq{op_compact}. Further, the sub-assembled operator $\Axt$ is positive definite.
\end{proposition}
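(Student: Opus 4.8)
The plan is to prove the two assertions separately, since they require different mechanisms. For the first claim (that the Galerkin projection onto $\Vcxt$ recovers \Eq{op_compact}), I would verify that the perturbation terms on the second line of \Eq{space-time-problem} cancel upon assembly across time-subdomain interfaces. The key observation is that a function $\uxt \in \Vcxt \subset \Vxt$ is continuous across $\Gamma \times T_\is$, so the duplicated values at $T_\is$ coincide: the value $\ulx(t_\is^\nes)$ at the top of subdomain $\sbt$ equals the value $\ulx(t_{\is+1}^0)$ at the bottom of subdomain $\sbt[\is+1]$. Thus when I sum the local problems \Eq{space-time-problem} over all $\is$, the term $-\frac{1}{2}\Ml(\ulx(t_\is^\nes), \vlx(t_\is^\nes))$ contributed by subdomain $\is$ pairs with $+\frac{1}{2}\Ml(\ulx(t_{\is+1}^0), \vlx(t_{\is+1}^0))$ contributed by subdomain $\is+1$, and these cancel exactly (the Kronecker factors are $1$ for genuine interior interfaces and switch off precisely at the global endpoints $t=0$ and $t=T$, where no cancellation is needed). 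What survives is the telescoped backward-Euler mass-difference term plus the stiffness term, which I would identify termwise with $\Acxt \uxt = \bar{\boldsymbol f}$ as written in \Eq{op_compact}, recalling that $\Ac$ and $\Mc$ are themselves the inter-subdomain assemblies of $\Al$ and $\Ml$ over $\omega \in \Theta$.

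For the second claim (positive definiteness of $\Axt$), I would test \Eq{space-time-problem} against $\vxt = \uxt$ and show the resulting quadratic form is strictly positive for $\uxt \neq 0$. Since $\Axt$ is block-diagonal over space-time subdomains, it suffices to treat a single $\sbxt$. Writing $u^\ie \doteq \ulx(t_\is^\ie)$ for brevity, the stiffness contribution $\sum_\ie |\elsbt|\,\Kl(u^\ie, u^\ie)$ is nonnegative since $\Kl$ is positive semidefinite. The delicate part is the mass contribution $\sum_\ie \Ml(u^\ie - u^{\ie-1}, u^\ie)$ together with the two boundary perturbations. I expect this to be the \emph{main obstacle}, because $\Ml(u^\ie - u^{\ie-1}, u^\ie)$ is not manifestly a positive form — it is a non-symmetric, convection-in-time-like term.

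The strategy for the mass term is an Abel/summation-by-parts identity. Using the algebraic identity $a(a-b) = \frac{1}{2}(a^2 - b^2) + \frac{1}{2}(a-b)^2$ applied in the $\Ml$-inner product, I would rewrite
\begin{equation*}
\sum_{\ie=1}^\nes \Ml(u^\ie - u^{\ie-1}, u^\ie) = \frac{1}{2}\Ml(u^\nes, u^\nes) - \frac{1}{2}\Ml(u^0, u^0) + \frac{1}{2}\sum_{\ie=1}^\nes \Ml(u^\ie - u^{\ie-1}, u^\ie - u^{\ie-1}).
\end{equation*}
The telescoping boundary pieces $\frac{1}{2}\Ml(u^\nes,u^\nes) - \frac{1}{2}\Ml(u^0,u^0)$ are exactly cancelled (up to the Kronecker switches) by the two perturbation terms in \Eq{space-time-problem}, which is precisely why those perturbations were introduced. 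What remains is $\frac{1}{2}\sum_\ie \Ml(u^\ie - u^{\ie-1}, u^\ie - u^{\ie-1}) \geq 0$ plus the nonnegative stiffness term, giving a nonnegative form overall. To upgrade nonnegativity to strict positive definiteness, I would argue that if the quadratic form vanishes then every increment $u^\ie - u^{\ie-1}$ vanishes in the $\Ml$-norm (so all $u^\ie$ are equal), and on a genuine interior time subdomain the surviving boundary term $\frac{1}{2}\Ml(u^0,u^0)$ (or $\frac{1}{2}\Ml(u^\nes,u^\nes)$) forces the common value to be zero; combined with positive definiteness of $\Ml$ on $\Vlx$, this yields $\uxt = 0$. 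The only point requiring care is the treatment of the first and last global time subdomains, where one perturbation switch is off: there I would use that $u^0 = 0$ is fixed by the zero-initial-condition constraint in $\Vcxt$, or more precisely verify positive definiteness on the full sub-assembled space by checking that the switched-off boundary term is compensated either by the initial constraint or by the stiffness contribution, so that no nontrivial kernel survives.
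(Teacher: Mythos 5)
Your first half (cancellation of the perturbation terms across time interfaces for functions continuous in time, hence recovery of \Eq{op_compact} after assembly) is exactly the paper's argument, and your summation-by-parts identity based on $a(a-b)=\frac{1}{2}(a^2-b^2)+\frac{1}{2}(a-b)^2$ reproduces the paper's energy identity \Eq{energy}. Up to that point you and the paper agree: $\Axt$ is positive \emph{semi}-definite. The gap is in your last step, where you try to upgrade this to strict positivity of the quadratic form. On a genuine interior time subdomain ($\is\neq 1,\ns$) there is \emph{no} surviving boundary term: the two perturbations cancel the telescoped pieces $\frac{1}{2}\Ml(u^\nes,u^\nes)-\frac{1}{2}\Ml(u^0,u^0)$ exactly, so the local contribution to $\Axt(\uxt,\uxt)$ is only $\frac{1}{2}\sum_\ie \Ml(u^\ie-u^{\ie-1},u^\ie-u^{\ie-1})+\sum_\ie|\elsbt|\,\Kl(u^\ie,u^\ie)$. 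This vanishes for a nonzero element of $\Vxt$: take $\uxt$ constant in time and spatially constant on a floating subdomain $\sbx$ (where constants lie in the kernel of $\Kl$), supported on a single interior-in-time $\sbxt$ and zero elsewhere. Hence the quadratic form is genuinely only semi-definite, the "surviving boundary term" you invoke does not exist on interior subdomains, and your kernel argument cannot close.

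The paper closes the proof by a different mechanism that you would need to add: $\Alxt$ is lower block triangular in time with diagonal blocks $\frac{1}{2}\Ml$ at the first time step, $\Ml+|\elsbt|\Kl$ at intermediate steps, and $\frac{1}{2}\Ml+|\elsbt|\Kl$ at the last step, all invertible since $\Ml$ is symmetric positive definite and $\Kl$ is positive semi-definite; hence $\Axt$ and $\Axt^T$ are non-singular. The proposition's "positive definite" is thus established as "positive semi-definite and non-singular," which is what the method actually requires (solvability of the local problems and of the Schur complements in \Eq{coarse_shape_schur}), rather than the pointwise statement $\Axt(\uxt,\uxt)>0$ for all $\uxt\neq 0$, which fails in the presence of floating subdomains.
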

\begin{proof}
In order to prove the equivalence, we need to show that $\Acxt$ is the Galerkin projection of $\Axt$ onto {$\Vcxt$}, which amounts to say that the perturbation terms vanish for $\uxt \in {\Vcxt}$. First, we note that the following equality
$$
\sum_{\is = 1}^\ns \left(\frac{{(1-\rm Kr}_{1,\is})}{2}  \ulx(t_\is^0) - \frac{{(1-\rm Kr}_{\ns,\is})}{2}  \ulx(t_\is^\nes) \right) = 0
$$
holds for functions that are continuous in time, {since $\ulx(t_{n-1}^{K_{n-1}}) = \ulx(t_{n}^{0})$ for $n=2,\ldots,N$}. On the other hand, multiplying the right-hand of \Eq{space-time-problem} against $\vxt = \uxt$, using the fact that $(a-b)a = \frac{1}{2}(a^2 - b^2) + \frac{1}{2}(a-b)^2$, we get 
\begin{equation}\label{eq:energy}
\Axt(\uxt,\uxt) = \frac{1}{2} \| u (T) \|^2 + \sum_{\is = 1}^\ns  \sum_{\ie=1}^\ne \left( \frac{1}{2} \| u(t_\is^\ie) - u(t_\is^{\ie-1}) \|^2 + | {\elsbt} | \K(  u(t_\is^\ie), u(t_\is^\ie)) \right).
\end{equation}
{Since $\K$ is positive semi-definite, $\Axt$ is positive semi-definite. On the other hand, $\Axt$ is a lower block triangular matrix. Restricted to one subdomain block, it has diagonal blocks $\frac{1}{2}\Ml$ at the first time step, $\Ml + | {\delta_\ie} | \Kl$ at intermediate time steps, and $\frac{1}{2}\Ml + | {\delta_\ie} | \Kl$ at the last time step. Since all these matrices are invertible, $\Axt$ is non-singular. Further, $\Axt^T$ is an upper triangular non-singular matrix. As a result, $\Axt$ is positive-definite.}
\end{proof}


\subsection{Coarse DOFs}

Let us define the continuity to be enforced among space-time subdomains. Let us consider a set of space objects $\Lambda_O$ (see Sect. \ref{sec:BDDC}). We define $\Vbddcxt \subset \Vxt$ as the subspace of functions $\vxt \in \Vxt$ such that the constraint
\begin{equation}\label{eq:constraint_space_xt}
{\sum_{\ie =1}^{\nes-1}}  | {\elsbt} | \int_{\lambda} \tau_\lambda^\sbx(\vlx(t_\is^\ie)) \qquad \hbox{is identical for all } \sbx \in {\rm neigh}(\lambda), 
\end{equation}
holds for every $\lambda \in \Lambda_O$, and 
\begin{equation}\label{eq:constraint_time_xt}
\int_\sbx \vlx(t_\is^0) = 
\int_\sbx \vlx(t_{\is-1}^{\ne_{\is-1}}), \qquad \hbox{for all } \, \sbx \in \Theta, \quad \is = 2, \ldots, \ns.
\end{equation}
The first set of constraints are the mean value of the space constraints in \Eq{coarse-dofs} over time sub-intervals $\Delta_\is$. The second constraint enforces continuity between two consecutive-in-time subdomains $\boldsymbol{\sbx}_{\is-1}$ and $\sbxt$ of the mean value of the function on their corresponding space subdomain $\sbx$. The Galerkin projection of $\Axt$ onto $\Vbddcxt$ is denoted by $\widetilde{\Axt}$.

Additionally, motivated by a space-time definition of objects, i.e., applying the object generation above to space-time meshes, we could also enforce the continuity of the coarse DOFs
\begin{align}\label{eq:coarse-dofs2}
\int_{\lambda} \tau_\lambda^\sbx(\vlx(t_\is^0)), \ \is = 2, \ldots, \ns \ \ \hbox{and} \ \
\int_{\lambda} \tau_\lambda^\sbx(\vlx(t_\is^\nes)), \ \is = 1, \ldots, \ns-1,
\end{align}
for every $\lambda \in \Lambda_O$. Thus, we are enforcing pointwise in time (in comparison to the mean values in \Eq{constraint_space_xt}) space constraints on time interfaces. Figure \ref{fig:objects} illustrates the resulting space-time set of objects where continuity is to be enforced in a sub-assembled space $\Vxt$. 

\begin{figure}[h!!]
\begin{center}
\includegraphics[width=0.57\textwidth]{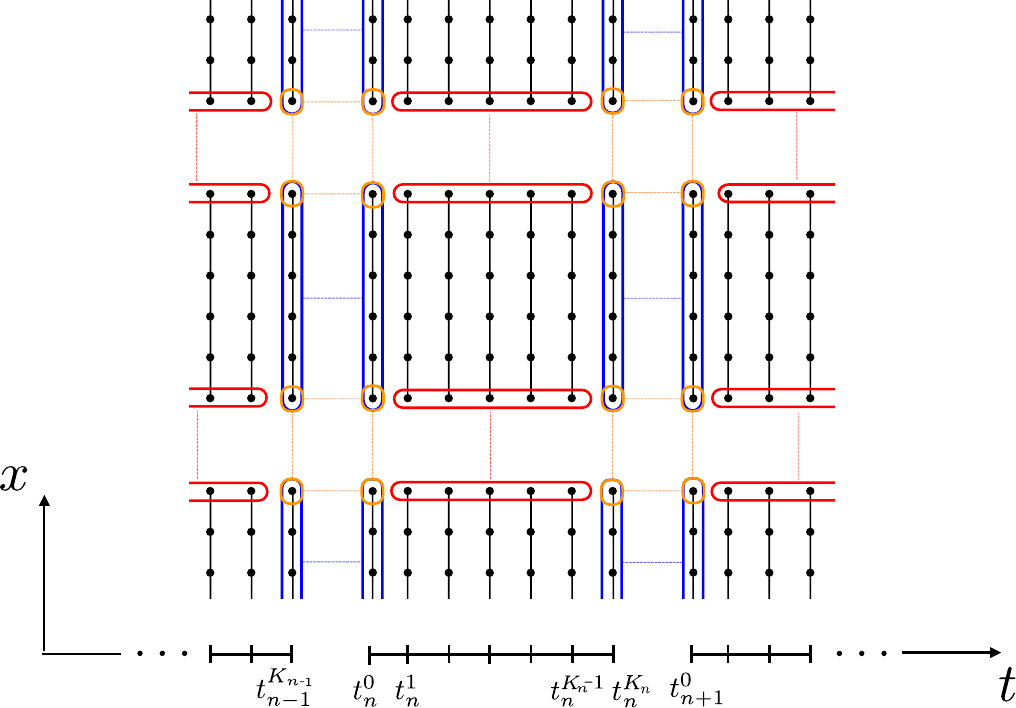}
\caption{Continuity to be enforced among space-time subdomains, for the 1-dimensional spatial domain case. The sets of nodes in red are related to the {\color{red} space constraints time averages over time sub-intervals} in \Eq{constraint_space_xt}, the ones in blue are the {\color{blue} space mean value constraints on time interfaces} in \Eq{constraint_time_xt}, and the ones in orange  are {\color{orange} spatial} {\color{orange} constraints on time interfaces} in \Eq{coarse-dofs2}.}
\end{center}
\label{fig:objects}
\end{figure}

\subsection{Transfer operator}
Next, we have to define a transfer operator from $\Vxt$ to $\Vcxt$, and the concept of harmonic extension in the space-time setting.
\begin{enumerate}
\item In order to define the space-time weighting operator, we make use of the spatial-only definition in \Eq{weighting}. Let us define the subdomain restriction of the weighting operator as $\Wlx u \doteq (\W u)_\sbx$. We define the space-time weighting operator restricted to $\sbxt$ as
\begin{equation}\label{eq:weighting_xt}
\Wlxt  \uf \doteq ( \Wlx u(t_{\is-1}^{\ne_{\is-1}}), \Wlx u(t_{\is}^{1}) , \ldots, \Wlx u(t_{\is}^{\nes}) ),
\end{equation}
and $\Wxt \doteq  \Pi_{\is = 1}^\ns \Pi_{\omega \in \Theta} \Wlxt \ulxt$. We can observe that this weighting operator uses the space-only weighting operator in \Eq{weighting}, in order to make the functions continuous in space. On the other hand, on the time interfaces $T_\is$ between subdomains $\sbxt$ and $\boldsymbol{\omega}_{\is+1}$ (for $\is = 1,\ldots,\ns-1$, where functions in $\Vxt$ can be discontinuous in time) we take the value at the preceding subdomain, i.e., $\boldsymbol{\omega}_{\is}$. This choice is motivated by the causality of the problem in time.

\item Next, we define the space-time interior correction. In order to do so, we first define the space-time {``bubble''} space as $\Vxt_0$, where its local component at $\sbxt$ is
$$
\vlxt = ( 0, \vlx(t_\is^1), \ldots, \vlx(t_\is^\nes) ), \qquad v(\cdot) \in \Vx_0.
$$
This definition of $\Vxt_0$ naturally arises from the definition of the weighting operator \Eq{weighting_xt}. The nodes that are enforced to be zero in $\Vxt_0$ are the ones that are modified by \Eq{weighting_xt}.  {$\boldsymbol{\mathcal{I}}_0$ is the trivial injection from $\Vxt_0$ to $\Vcxt$ and we denote the Galerkin projection of $\Axt$ as $\Axt_0$.} Its inverse involves local subdomain problems like \Eq{space-time-problem} with zero initial condition and homogeneous boundary conditions on $\Gamma$. Finally, we define the space-time  ``harmonic'' extension operator as $\Ext \vxt \doteq (\boldsymbol{I} -\boldsymbol{\mathcal{I}}_0 \Axt_0^{-1} \boldsymbol{\mathcal{I}}_0^T \Acxt) \vxt$. {Functions $\vxt \in \Vcxt$ such that $\Ext \vxt = \vxt$ are denoted as ``harmonic'' functions.}
\end{enumerate}
We finally define the transfer operator $\Qxt: \Vxt \rightarrow \Vcxt$ as $\Qxt \doteq \Ext \Wxt$. 

\subsection{Space-time-parallel preconditioner}
After extending the previous ingredients to space-time, we can now define the STBDDC preconditioner as
{
\begin{equation}\label{eq:stbddc}
\Bxt \doteq \boldsymbol{\mathcal{I}}_0\Axt_0^{-1}\boldsymbol{\mathcal{I}}_0^T + \Qxt \widetilde \Axt^{-1} \Qxt^T.
\end{equation}
}
In the following section, we will analyse the quality of $\Bxt$ as a preconditioner for $\Acxt$. We are particularly interested in the weak scalability properties of the preconditioner. Again, this preconditioner can be cast in the additive Schwarz theory.


\subsection{Implementation aspects}
Let us make some comments about the efficient implementation of the STBDDC preconditioner. 
We want to solve system \Eq{original_problem} (or equivalently \Eq{op_compact}) for all time steps in one shot by using a Krylov iterative solver preconditioned with the STBDDC preconditioner \Eq{stbddc}. {As usual in DD preconditioning, 
it is common to take as initial guess for the Krylov solver the interior correction $\uxt^0 = \boldsymbol{\mathcal{I}}_0 \Axt_0^{-1} \boldsymbol{\mathcal{I}}_0^T \fxt$. In this case, it can be proved by induction that applying a Krylov method with $\Bxt$ as a preconditioner will give at each iterate $\Vxt_0$-orthogonal residuals of the original problem \Eq{op_compact} and ``harmonic'' directions} 
(see, e.g., \cite{Mandel2003}). 
Thus, the application of the BDDC preconditioner applied to $\rxt \in \Vxt'$ such that $\rxt \perp \Vxt_0$ can be simplified as:
$$
\Bxt \rxt = \Ext \Wxt \widetilde \Axt^{-1} \Wxt^T \rxt.
$$
It involves the following steps.

\begin{enumerate}
\item Compute $\sxt \doteq \Wxt^T \rxt$.
By the definition in \Eq{weighting_xt}, the restriction of $\sxt = \Wxt^T \rxt$ to $\sbxt$ is $\sxt_\sbxt = (0, \Wx^T_\sbx r(t_\is^1), \ldots, \Wx^T_\sbx r(t_\is^\nes) )$, where $\W_\sbx = \diag( 1/|{\rm neigh(\xi)}| )$. This operation implies nearest neighbour communications only. 

\item Compute $\widetilde \Axt^{-1} \sxt$.
In order to compute this problem, we first use the following decomposition of $\Vbddcxt$ into the subspaces $\Vbddcxt_F$ and $\Vbddcxt_C$. $\Vbddcxt_F$ is the set of functions that vanish on the coarse DOFs \Eq{constraint_space_xt}-\Eq{coarse-dofs2}. $\Vbddcxt_C$ is the complement of $\Vbddcxt_F$, which provides the values on the coarse DOFs. We define $\Vbddcxt_C$ as the span of the columns of $\Phixt$, where $\Phixt$ is the solution of 
\begin{equation}\label{eq:coarse_shape}
\left[
\begin{array}{cc}
\Alxt & \Clxt^T \\
\Clxt & \boldsymbol{0}
\end{array} \right] 
\left[
\begin{array}{c}
\Philxt \\
\llxt
\end{array} \right] 
=
\left[
\begin{array}{c}
\boldsymbol{0} \\
\boldsymbol{I}
\end{array} \right], 
\end{equation}
{where we have introduced the notation $\Clxt$ for the matrix associated to the coarse DOFs constraints}. We can check that (see \cite[p.~206]{Brenner2010} for the symmetric case) (1) $\Vbddcxt_F \perp_\Axt \Vbddcxt_C$, (2) $\Vbddcxt = \Vbddcxt_F \oplus  \Vbddcxt_C$. The local problems in \Eq{coarse_shape} are indefinite (and couple all time steps in one subdomain). In order to be able to use sequential-in-time local solvers and sparse direct methods for positive-definite matrices, we propose the following approach (see \cite{dohrmann_preconditioner_2003} for the space-parallel BDDC preconditioner). Using the fact that $\Alxt$ is non-singular (see Proposition \ref{perturbation}), we can solve \Eq{coarse_shape} using the Schur complement:
\begin{equation}\label{eq:coarse_shape_schur}
- \Clxt \Alxt^{-1} \Clxt^T \llxt = \boldsymbol{I}, \qquad \Philxt = - \Alxt^{-1} \Clxt^T \llxt.
\end{equation}
Further, for non-symmetric problems (as the space-time problem considered herein), we also require to define $\Vbddcxt_C^*$ as the span of the columns of $\Psixt$, where $\Psixt$ is the solution of 
$$
\left[
\begin{array}{cc}
\Alxt^T & \Clxt^T \\
\Clxt & \boldsymbol{0}
\end{array} \right] 
\left[
\begin{array}{c}
\Psilxt \\
\llxt
\end{array} \right] 
=
\left[
\begin{array}{c}
\boldsymbol{0} \\
\boldsymbol{I}
\end{array} \right].
$$
This problem is similar to \Eq{coarse_shape}, but replacing $\Alxt$ by $\Alxt^T$ ($\Alxt^T$ is an upper triangular non-singular matrix from Proposition \ref{perturbation}). Thus, we can use the Schur complement approach (like in \Eq{coarse_shape_schur}) to exploit sequentiality (backward in time) for the local problems. 
\end{enumerate}

With these spaces, the original problem to be solved, $\widetilde \Axt \uf = \sxt$, can be written as: find $\uxt = \uxt_F + \uxt_C \in \Vbddcxt$, where $\uxt_F \in \Vbddcxt_F$ and  $\uxt_C \in \Vbddcxt_C$ satisfy
$$
\Axt(\uxt_F , \vxt_F ) + \Axt(\uxt_C , \vxt_C^* ) = {(\boldsymbol{s},\vxt_F)} +  {(\boldsymbol{s},\vxt_C^*)} , \hbox{ for any } \vxt_F \in \Vbddcxt_F, \ \vxt_C^* \in \Vbddcxt_C^*,
$$
{where we have used the orthogonality property
$$
\Axt(\uxt_F + \uxt_C, \vxt_F + \vxt^*_C) = \Axt(\uxt_F , \vxt_F ) + \Axt(\uxt_C , \vxt_C^* ).
$$} 
Thus, it involves a fine problem and a coarse problem that are independent. The computation of the fine problem has the same structure as \Eq{coarse_shape} (with a different forcing term), and can be computed using the Schur complement approach in \Eq{coarse_shape_schur}. The Petrov-Galerkin type coarse problem couples all subdomains and is a basis for having a weakly scalable preconditioner. Its assembly, factorization, and solution is centralised in one processor or a subset of processors. 

Summarising, the STBDDC preconditioner can be implemented in such a way that standard sequential-in-time solvers can still be applied for the local problems. Due to the fact that coarse and fine problems are independent, we can exploit an overlapping implementation, in which computations at fine/coarse levels are performed in parallel. This implementation has been proved to be very effective at extreme scales for space-parallel BDDC solvers in \cite{badia_highly_2014,badia_multilevel_2016,badia_scalability_2015}. The implementation of the STBDDC preconditioner used in Sect. \ref{sec:num_exp} also exploits this overlapping strategy.

\section{Numerical experiments}\label{sec:num_exp} 

In this section we evaluate the weak scalability {for the CDR problem \Eq{cd} of the proposed STBDDC preconditioner, when combined with the right-preconditioned version of the iterative Krylov-subspace method GMRES.} The STBDDC-GMRES solver is tested with the 2D CDR PDE on regular domains. Domains are discretized with structured Q1 FE meshes and backward-Euler time integration is performed with a constant step size {$|\delta_k|$}. As performance metrics, we focus on the number of STBDDC preconditioned GMRES iterations required for convergence, and the total computation time. This time will include both preconditioner set-up and the preconditioned iterative solution of the linear system in all the experiments reported. The nonlinear case is linearized with a Picard algorithm and a relaxation factor of $\alpha=0.75$. The stopping criteria for the iterative linear solver is the reduction of the initial residual {algebraic} $\ell_2$-norm by a factor  $10^{-6}$. The nonlinear Picard algorithm stopping criteria is the reduction of the {algebraic} $\ell_2$-norm of the nonlinear residual below $10^{-3}$. 

The problem to be solved is the CDR problem \Eq{cd}. We may consider the Poisson problem for $\beta = (0,0)$ and $\sigma = 0$. Further, we will also tackle the $p$-Laplacian problem, by taking the nonlinear viscosity $\nu(u) = \nu_0 | \nabla u|^{p}$ (with $p \geq 0$ and $\nu_0 > 0$), $\beta={(0,0)}$ and $\sigma = 0$. 

\subsection{Experimental framework}

The novel techniques proposed in this paper for the STBDDC-GMRES solver have been implemented in FEMPAR. FEMPAR, developed by the Large Scale Scientific Computing (LSSC) team at CIMNE-UPC, is a parallel hybrid OpenMP/MPI, object-oriented software package for the massively parallel FE simulation of multiphysics problems governed by PDEs. Among other features, it provides the basic tools for the efficient parallel distributed-memory implementation of substructuring DD solvers~\cite{badia_implementation_2013,badia_highly_2014,badia_multilevel_2016}. The parallel codes in FEMPAR heavily use standard computational kernels provided by (highly-efficient vendor implementations of) the BLAS and LAPACK. Besides, through proper interfaces to several third party libraries, the local constrained Neumann problems and the global coarse-grid problem can be solved via sparse direct solvers. FEMPAR is released under the GNU GPL v3 license, and is more than 200K lines of Fortran95/2003/2008 code long.  Here, we use the overlapped BDDC implementation proposed in \cite{badia_highly_2014}, with excellent scalability properties. It is based on the overlapped computation of coarse and fine duties. {As long as} coarse duties can be fully overlapped with fine duties, perfect weak scalability can be attained. We refer to \cite{badia_highly_2014} and \cite{badia_multilevel_2016} for more details. Results reported in this section were obtained on two different distributed-memory platforms: the Gottfried complex of the HLRN-III Cray system, located in Hannover (Germany) and the MareNostrum III, in the Barcelona Supercomputing Centre (BSC). In all cases, we consider a one-to-one mapping among subdomains, cores and MPI tasks, plus one additional core for the coarse problem (see \cite{badia_implementation_2013,badia_highly_2014} for details).


\subsection{Weak scalability setting}

In computer science parlance weak scalability is related to how the solution time varies with the number of processors for a fixed problem size per processor. When the time remain asymptotically constant, we say that we have a scalable algorithm. When we consider problems that are obtained after discretization of differential operators, the concept of weak scalability is suitable \emph{as soon as} the relation between the different terms in the (discretization of the) PDE remains constant in the weak scalability analysis. This is the case in most scalability analyses of PDE solvers, which usually deal with steady Poisson or linear elasticity problems. However, the situation becomes more involved as one faces more complicated problems, that combine multiple differential terms of different nature. The simplest example is the CDR equation \Eq{cd}. One can consider a fixed domain $\Omega$ and fixed physical properties, and produce a weak scalability analysis by increasing the number of elements (i.e., reducing $h$) in FEs, and the number of subdomains (i.e., reducing $H$) in the same proportion.  However, as we go to larger scales, the problem to be solved tends to a simple Poisson problem (convective terms are $\mathcal{O}(1/h)$ whereas diffusive terms are $\mathcal{O}(1/h^2)$). The same situation happens for space-only parallelization of transient problems because the CFL changes in the scalability analysis. This situation has already been identified in \cite{falgout_parallel_2014,cyr_stabilization_2012}, leading to what the authors call CFL-constant scalability. In these simulations, the CFL is constant, but still, spatial differential terms can change their relative weight in the scalability analysis, e.g., one keeps the convective CFL, { i.e., $\text{CFL}_{\beta} = |\beta|\frac{|\delta|}{h}$,}  but not the diffusive CFL, {i.e., $\text{CFL}_{\nu}=\nu \frac{|\delta|}{h^2}$} (see \cite{cyr_stabilization_2012}). 

Weak scalability analysis of PDE solvers should be such that the relative weight of all the discrete differential operators is kept. To do that, we keep fixed the physical problem to be solved (boundary conditions, physical properties, etc), the FE mesh size $h$, and the subdomain size $H$, but increase (by scaling) the physical domain $\Omega \rightarrow \alpha \Omega$ and subsequently the number of subdomains and FEs. Let us consider that $\Omega = [0,1]^d$, a FE mesh of size $h = (1/n_h)^d$, and a subdomain size $H = (1/n_H)^d$. Now, we consider $\Omega' = \alpha \Omega = [0, \alpha]^d$, $\alpha \in \mathbb{N}^+$. The FE partition now must involve $\alpha n_h$ FE partitions per dimension ($\alpha^d n_h^d$ FEs) and $\alpha n_H$ subdomain partitions per dimension ($\alpha^d n_H^d$ subdomains). It is also possible to apply this approach to unstructured meshes and space-time domains. 
 Weak scalability in the sense proposed herein is not only about the capability to solve larger problems but also more complicated ones. E.g., we keep fixed the local Reynolds or P\'eclet number or CFLs, but we increase the global Reynolds or P\'eclet number, facing not only a larger problem but also a harder one, in general. We have used this definition of weak scalability for PDE solvers in the numerical experiments below for time and space-time parallel solvers.  

\subsection{Weak scalability in Time}\label{sec:numexp_TBDDC}

In this case, the spatial domain is not partitioned and only the time integration is distributed through $P_t$ processors. This fact leads to enforced continuity of mean values of the function on the spatial domain $\Omega$ on time interfaces, i.e., constraint \Eq{constraint_time_xt} with $\omega = \Omega$. In order to maintain a constant CFL number, the original time interval $(0,T]$ is scaled with the number of processors, i.e., $T' = P_t T$. As a result, using $P_t$ processors we solve a $P_t$ times larger time domain (and time steps). Note that with this approach neither $| \elt |$ nor $| \sbt |$ are modified through the analysis. 

\subsubsection{Time-parallel Poisson solver} 
Consider the transient Poisson equation (Eq. \Eq{cd} with $\beta = (0,0)$ and $\sigma = 0$) with $\nu=1$ on the unit square spatial domain $\Omega = [0,1]^2$ and $T = 1$. The source term $f$ is chosen such that $u(x, t ) = \sin(\pi x)\sin(\pi y)\sin(\pi t)$ is the solution of the problem. Homogeneous Dirichlet boundary conditions and zero initial condition are imposed. We perform the weak scalability analysis of the TBDDC-GMRES solver with element size $h=\frac{1}{30}$ and several values of $K_n=\frac{| \Delta_n|}{ |\delta_k |} = \{ 10,15,30,60 \}$. 

\begin{figure}[h!!]
\begin{center}
\begin{tabular}{@{}c@{}c@{}}
\includegraphics[width=0.47\textwidth]{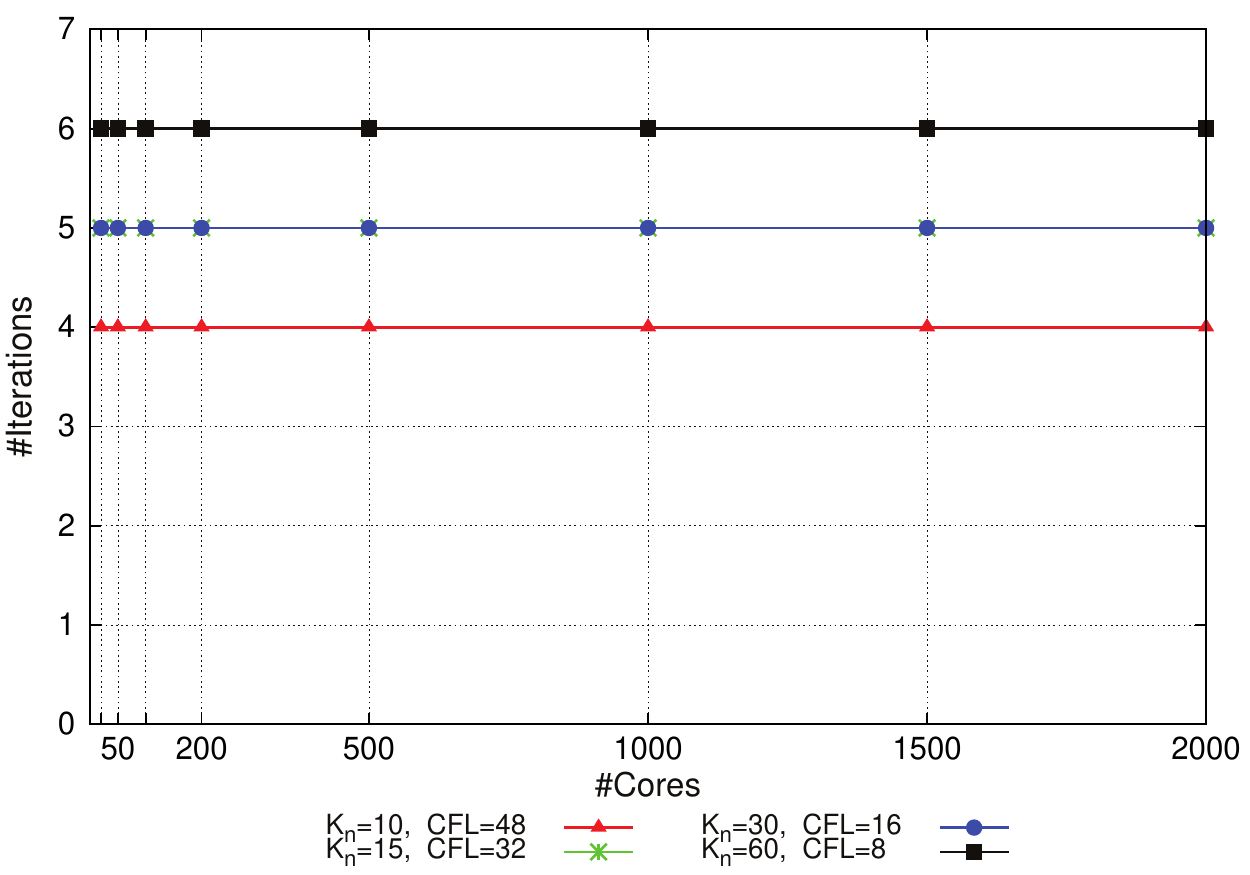} &
\includegraphics[width=0.47\textwidth]{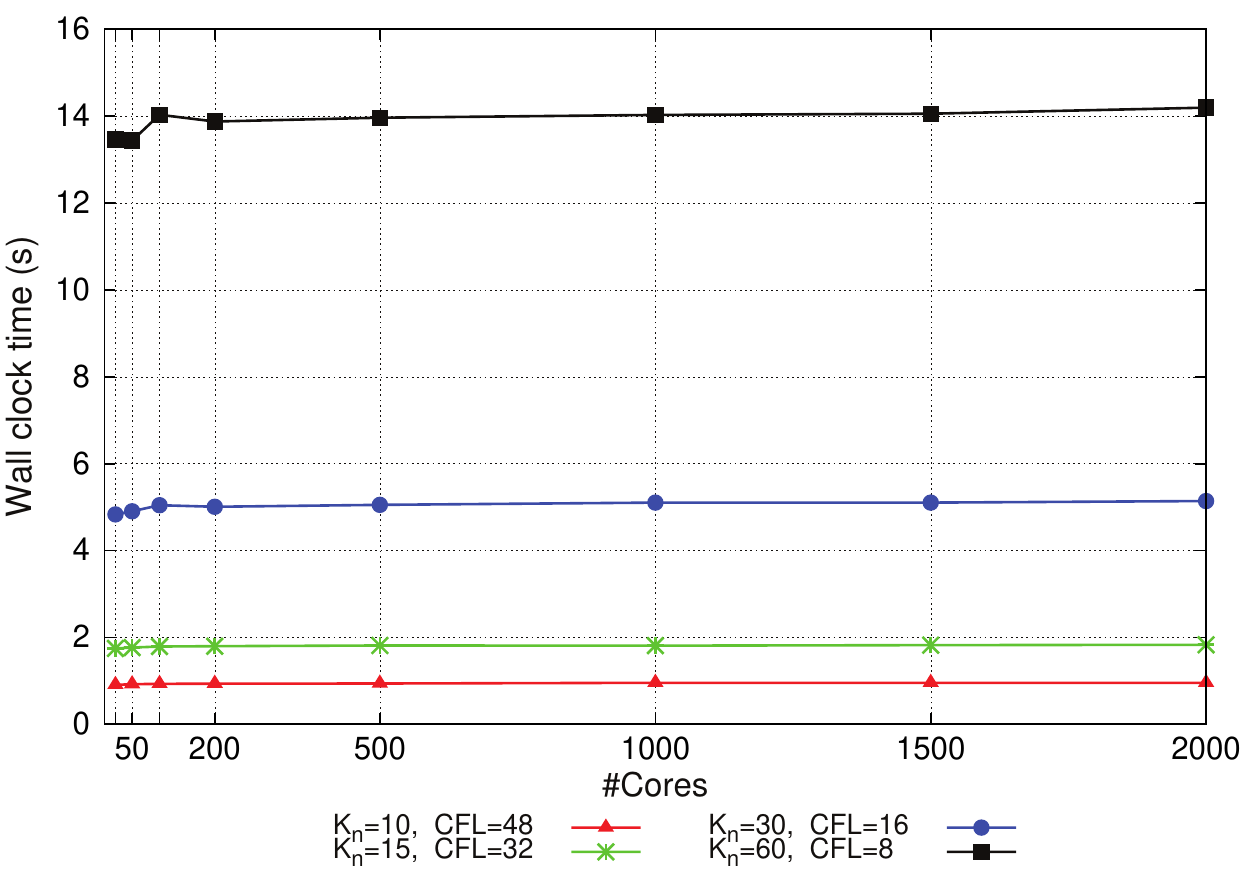} \\
\multicolumn{2}{c}{(a) Iteration counter and computing time for $| \Delta_n | = \frac{8}{15}$. } \\
\includegraphics[width=0.47\textwidth]{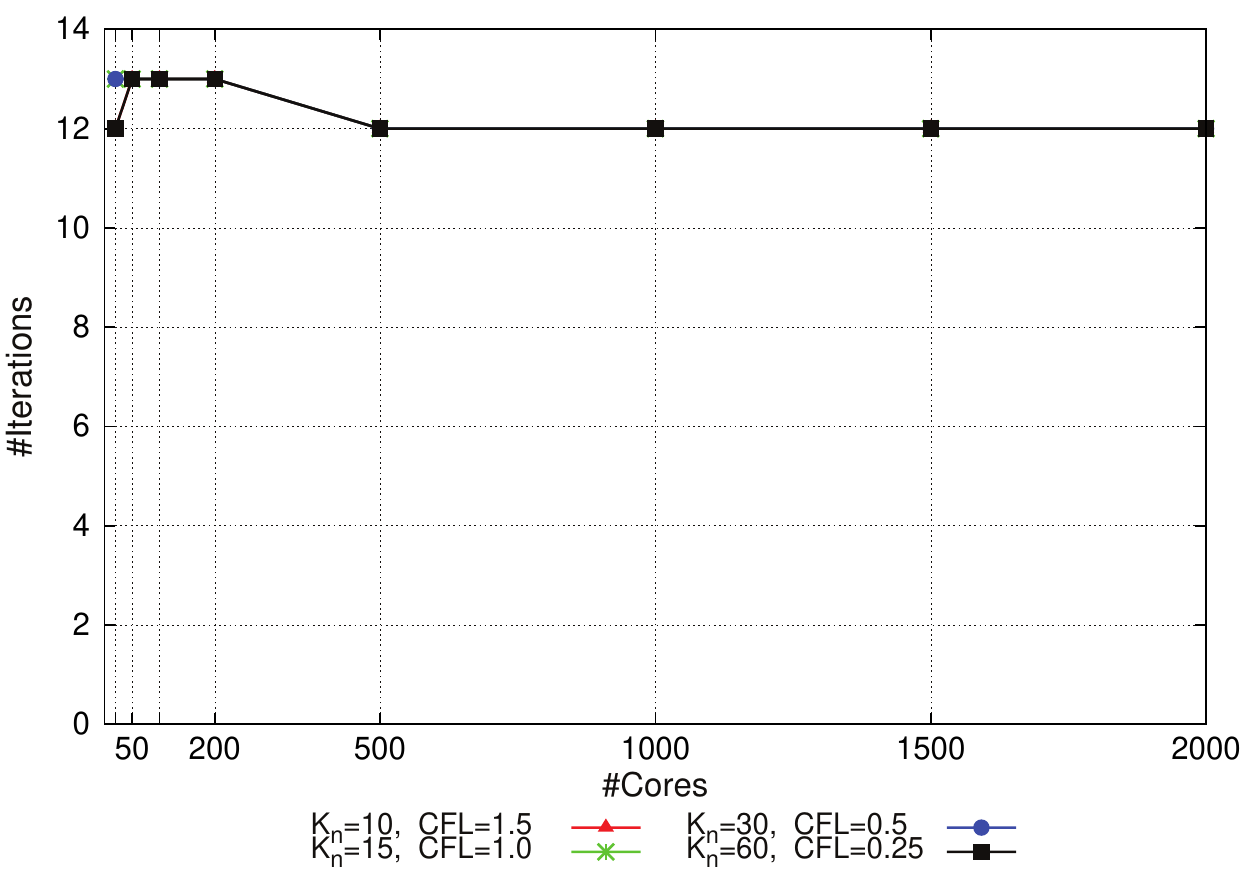} &
\includegraphics[width=0.47\textwidth]{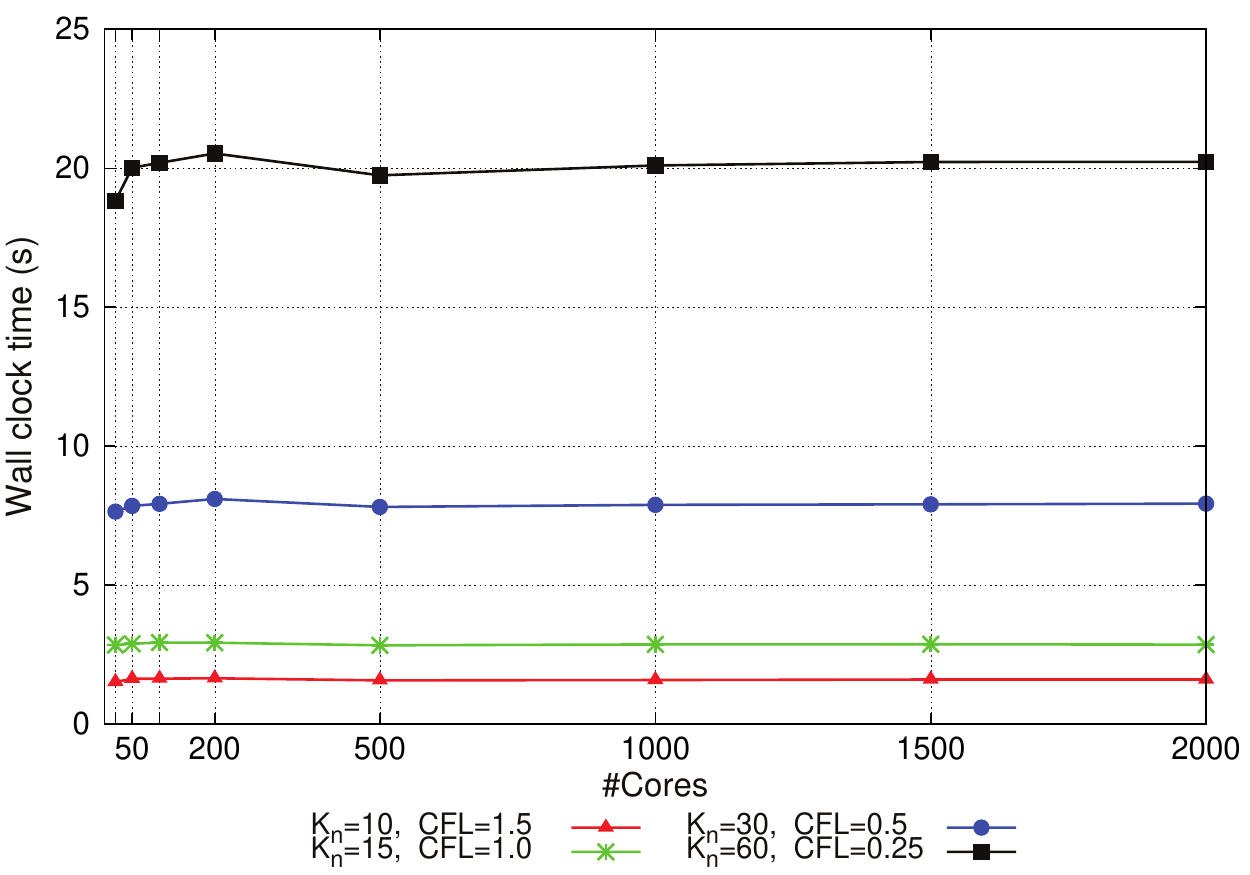} \\
\multicolumn{2}{c}{(b) Iteration counter and computing time for $ | \Delta_n | = \frac{1}{60}$.} \\
\includegraphics[width=0.47\textwidth]{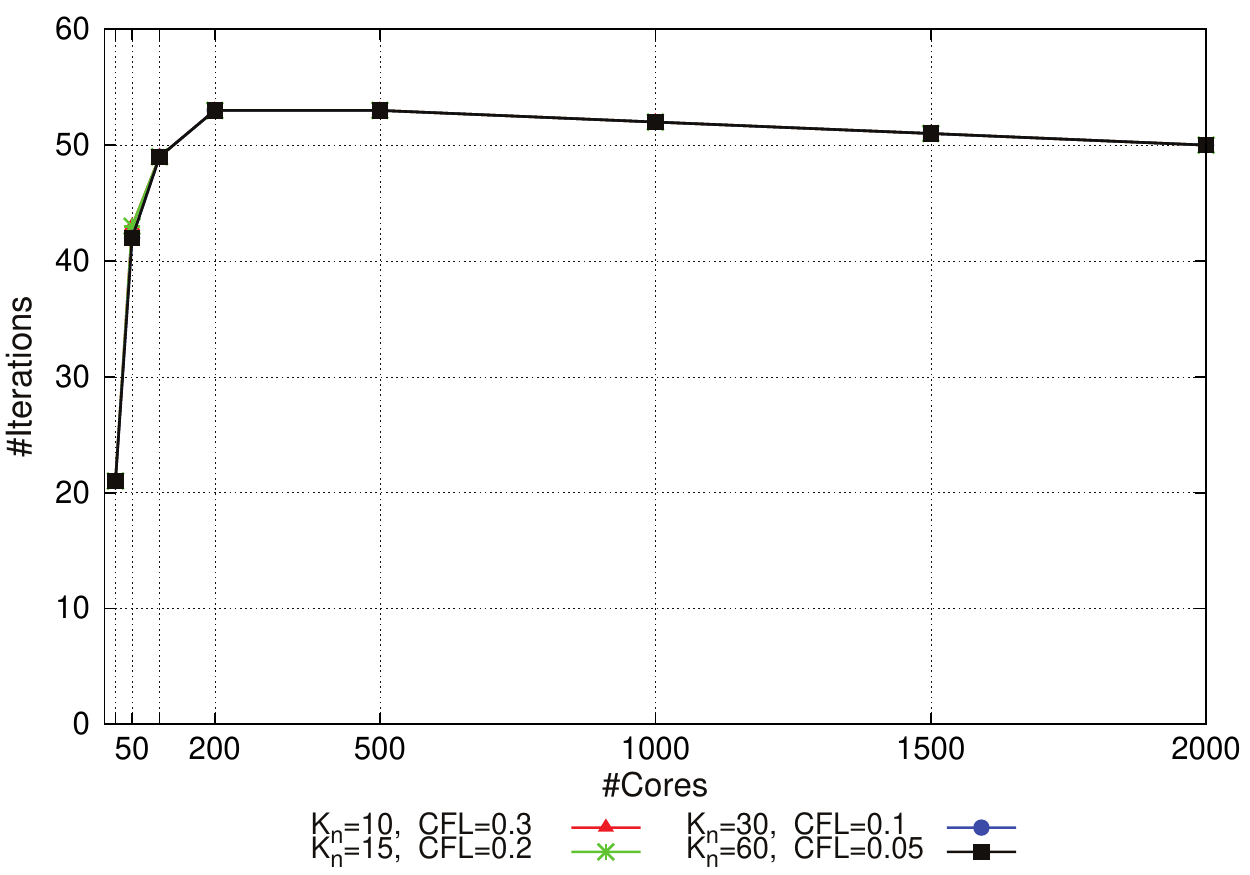} &
\includegraphics[width=0.47\textwidth]{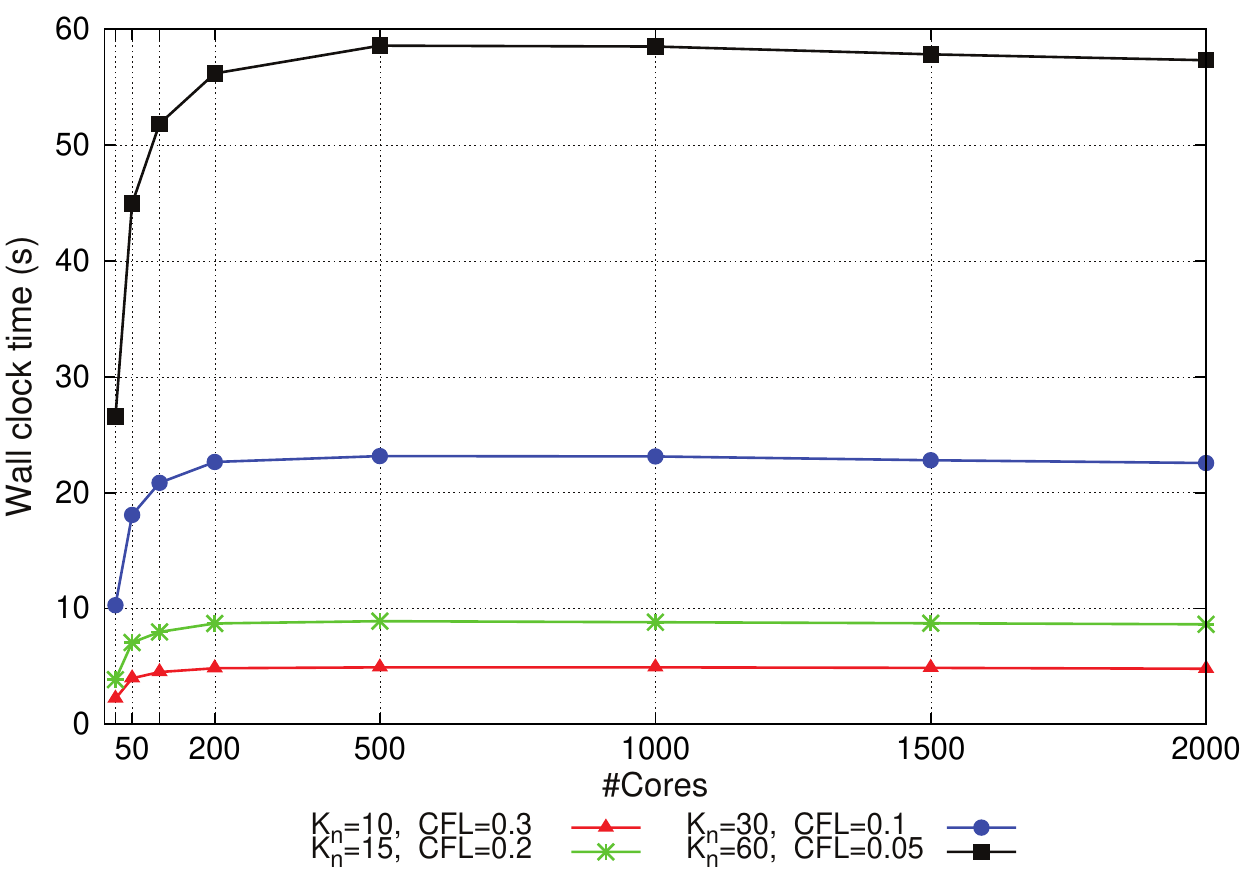} \\
\multicolumn{2}{c}{(c) Iteration counter and computing time for $ | \Delta_n | = \frac{1}{300}$.} \\
\end{tabular}
\end{center}
\caption{Weak scalability for the iterations count (left) and total elapsed time (right) of the TBDDC-GMRES solver in the solution of the unsteady 2D Poisson problem on HLRN. Fixed element size $h=1/30$ while time partition on $P_t$ subdomains. }
\label{fig:heat_tbddc_hlrn}
\end{figure} 

Fig. \ref{fig:heat_tbddc_hlrn} reports the weak scalability analysis of the TBDDC-GMRES solver for this experiment. While $h$ is kept fixed, we evaluate different values of $K_n$  and $| \Delta_n |$, which lead to a wide range of {diffusive} CFLs, shown in \ref{fig:heat_tbddc_hlrn}(a), \ref{fig:heat_tbddc_hlrn}(b), and \ref{fig:heat_tbddc_hlrn}(c) .


Out of these plots, we can draw some conclusions. First, for a fixed local problem size and physical coefficients, reducing the {diffusive} CFL by reducing the time step size results in more iterations. Second, and most salient, the algorithm is in fact weakly scalable. In fact, for a fixed local problem size and {diffusive} CFL, as one increases the number of processors, i.e., computes more time steps, the number of iterations is asymptotically constant. In this range, the overlapped fine/coarse strategy leads to perfect weak scalability for time-parallel solvers too. As a result, this analysis shows the capability of the method to compute X times more time steps with X times more cores for the same {total elapsed} time, which is the main motivation of time-parallelism. 


\subsubsection{Time-parallel CDR solver} Consider now the transient CDR equation \Eq{cd} with $\nu=10^{-3}$ and $\sigma=10^{-4}$ on $\Omega=[0,1]^2$ with homogeneous Dirichlet boundary conditions and null initial solution. We take $T= \frac{1}{10}$. In order to show results for different {convective} CFL ranges, two convection velocity fields are analysed, namely $\beta = (1,0)$ and $\beta= (10,0)$. The CFL values shown are those corresponding to the convective term since it is more restrictive than the diffusive CFL in all the cases being considered. The SUPG stabilization technique is employed (see \cite{brooks_streamline_1982}). 

We perform the study with several values of $K_n=\frac{| \Delta_n |}{| \delta_k |} = \{ 10, 30 ,60 \} $, which lead to different {convective} CFLs. For the first case (Fig. \ref{fig:CD_tbddc_analytical_hlrn}), the source term is chosen such that the function $u(x, t ) = \sin(\pi x)\sin(\pi y)\sin(\pi t)$ is the solution of the problem. For the second test (Fig. \ref{fig:boundary_layer_hlrn}), we take  ${f}={1}$, with a boundary layer formation. \\
Out of these plots we can extract the same conclusions from Figs. \ref{fig:CD_tbddc_analytical_hlrn} and \ref{fig:boundary_layer_hlrn} as for the Poisson problem. The method is weakly scalable in time for transient CDR problems.

\begin{figure}[h!]
\begin{center}
\begin{tabular}{@{}c@{}c@{}}
\includegraphics[width=0.47\textwidth]{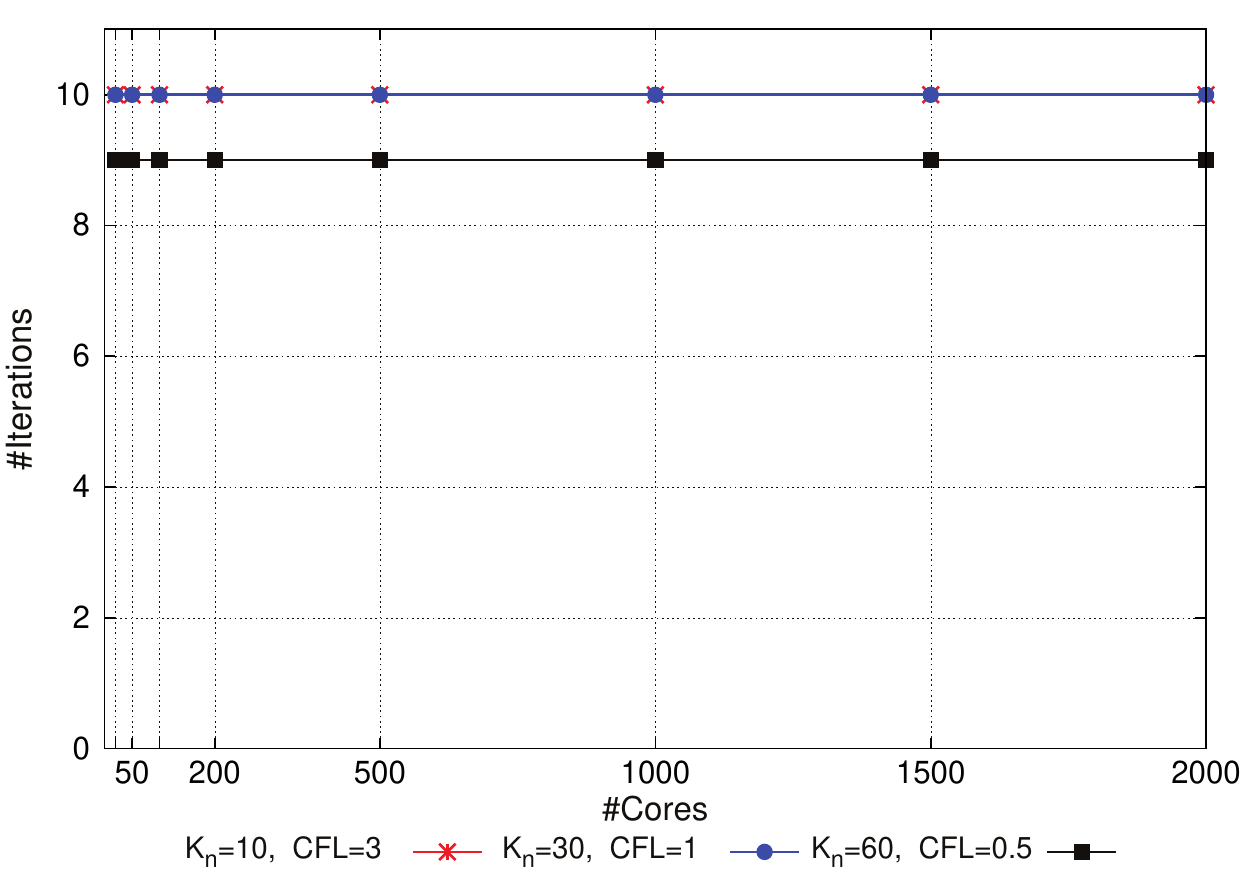} &
\includegraphics[width=0.47\textwidth]{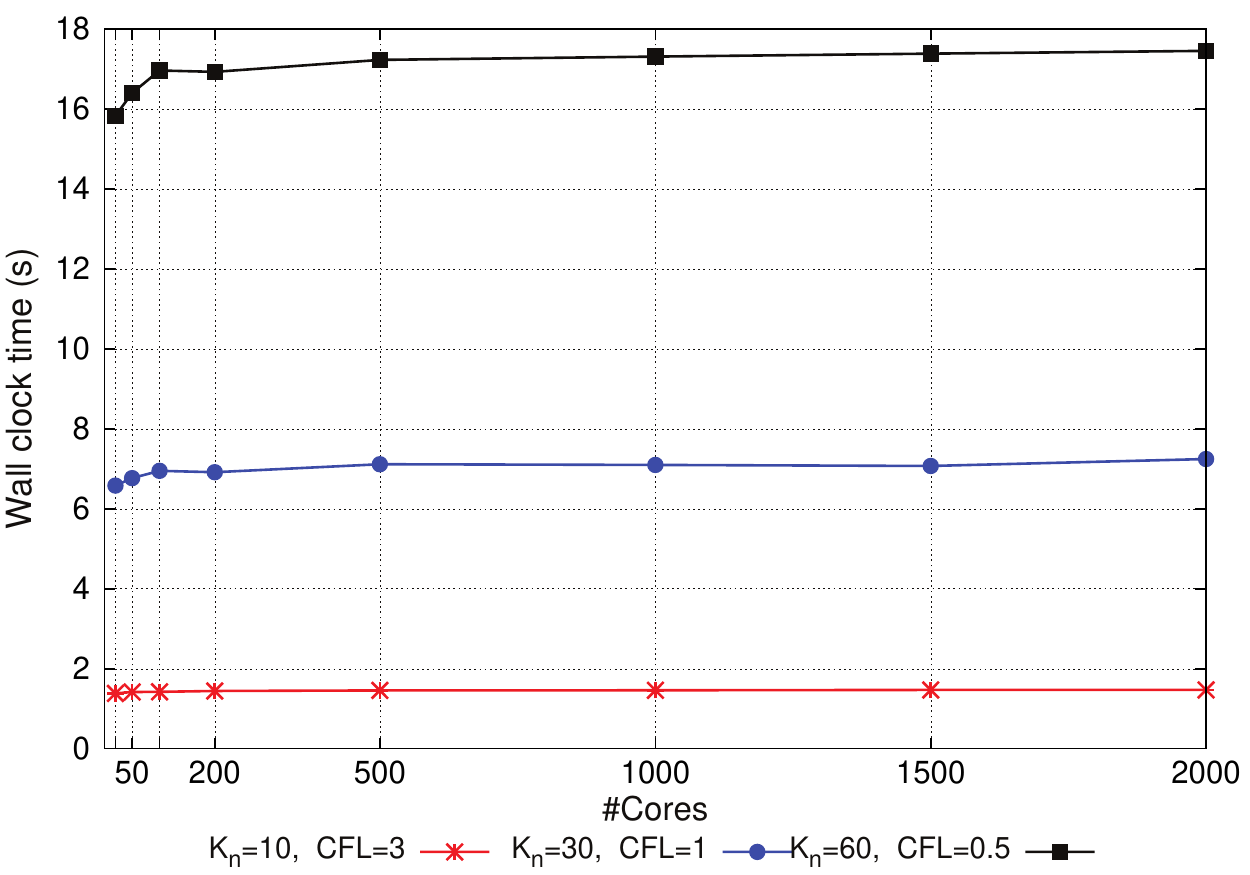} \\
\multicolumn{2}{c}{(a) Iteration counter and computing time with $\beta= (10,0)$. } \\
\includegraphics[width=0.47\textwidth]{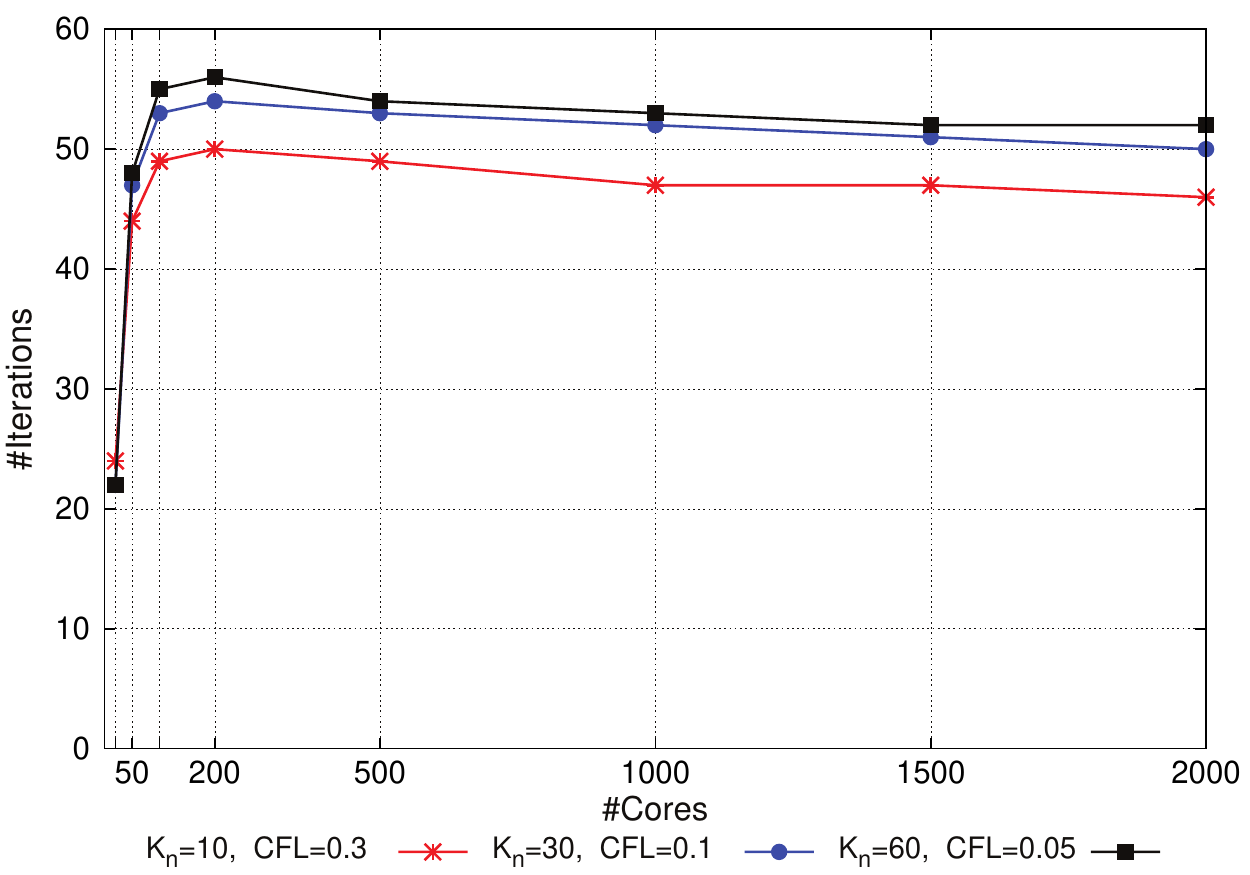} &
\includegraphics[width=0.47\textwidth]{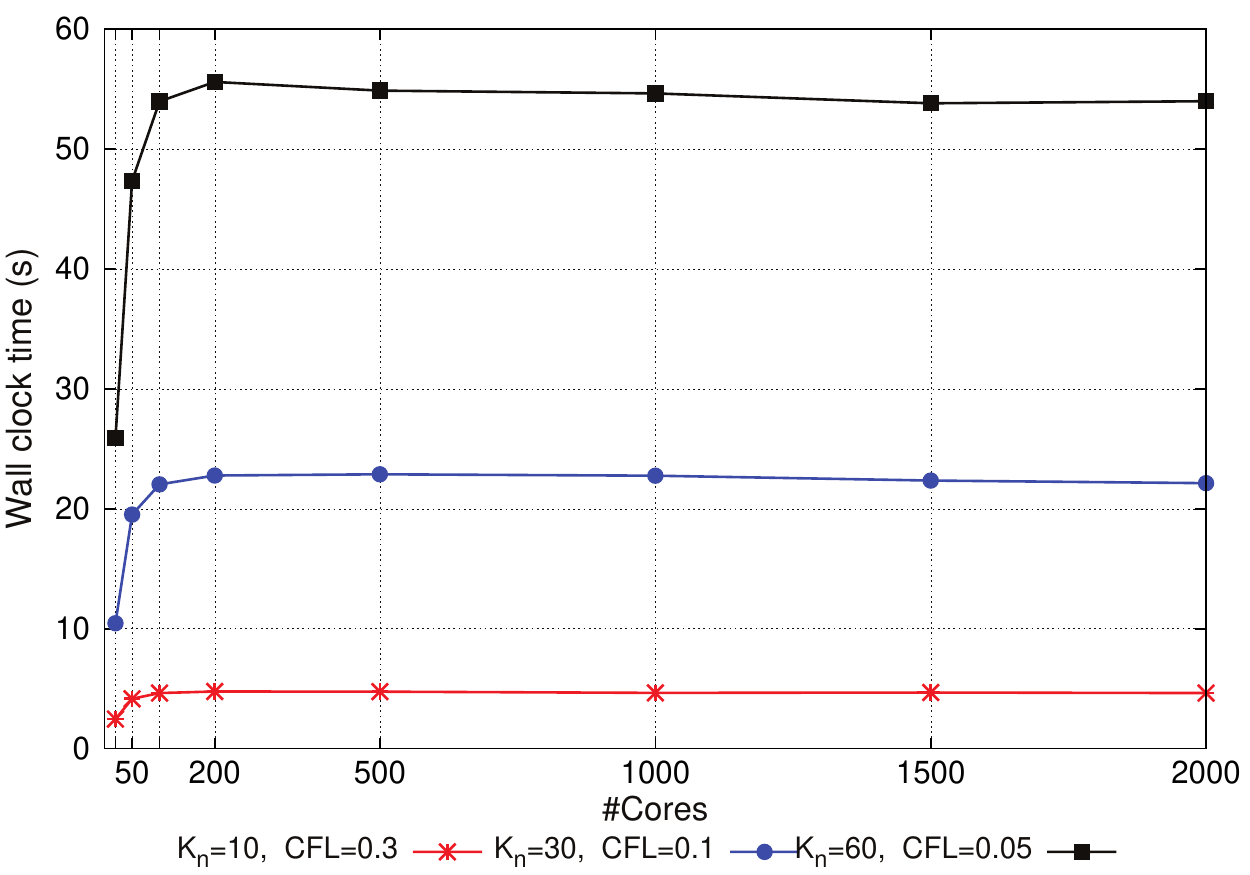} \\
\multicolumn{2}{c}{(b) Iteration counter and computing time with $\beta=(1,0)$. } \\
\end{tabular}
\end{center}
\caption{Weak scalability for the total elapsed time (right) and number of GMRES iterations (left) of the TBDDC solver in the solution of the 2D CDR equation on HLRN (sinusoidal solution). Fixed element size to $h=1/30$. }
\label{fig:CD_tbddc_analytical_hlrn}
\end{figure}


\begin{figure}[h!]
\begin{center}
\begin{tabular}{@{}c@{}c@{}}
\includegraphics[width=0.47\textwidth]{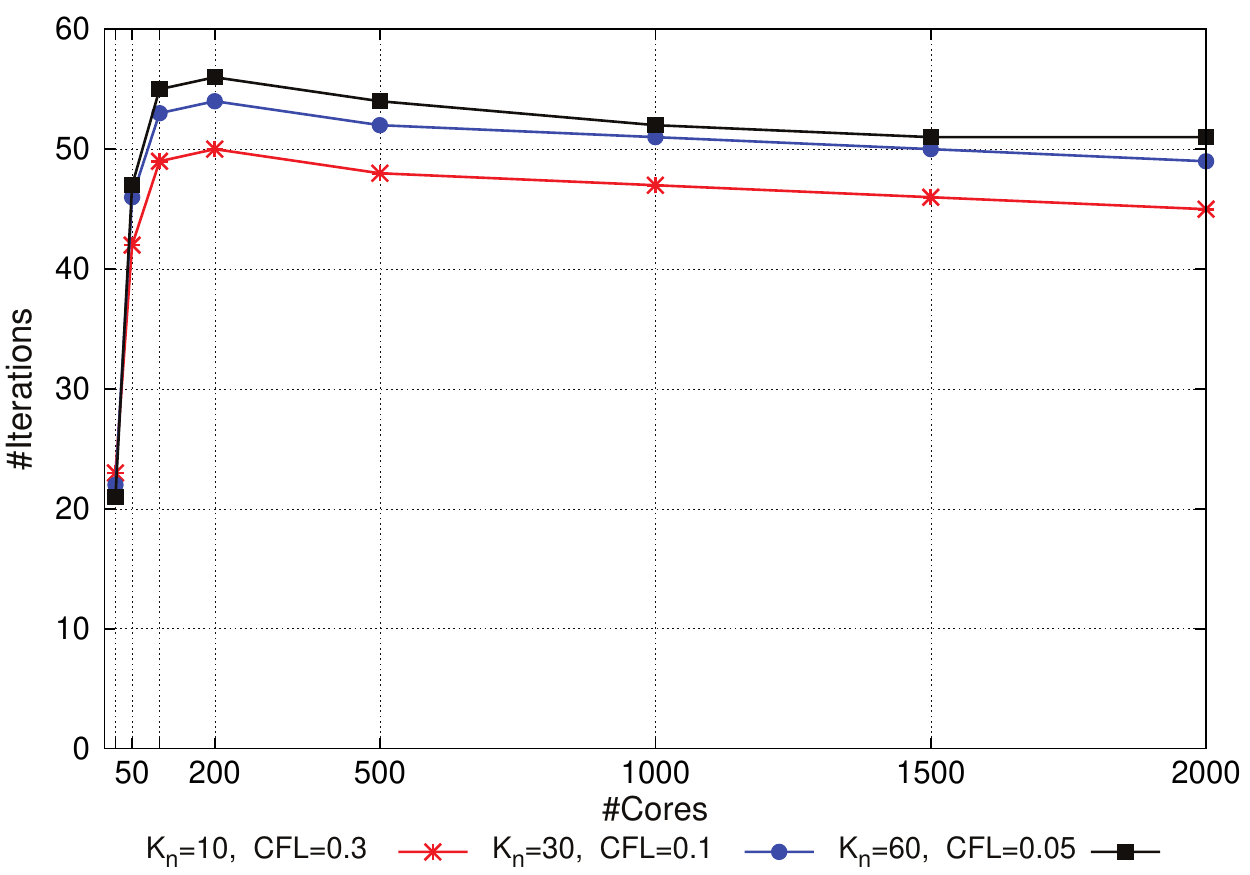} &
\includegraphics[width=0.47\textwidth]{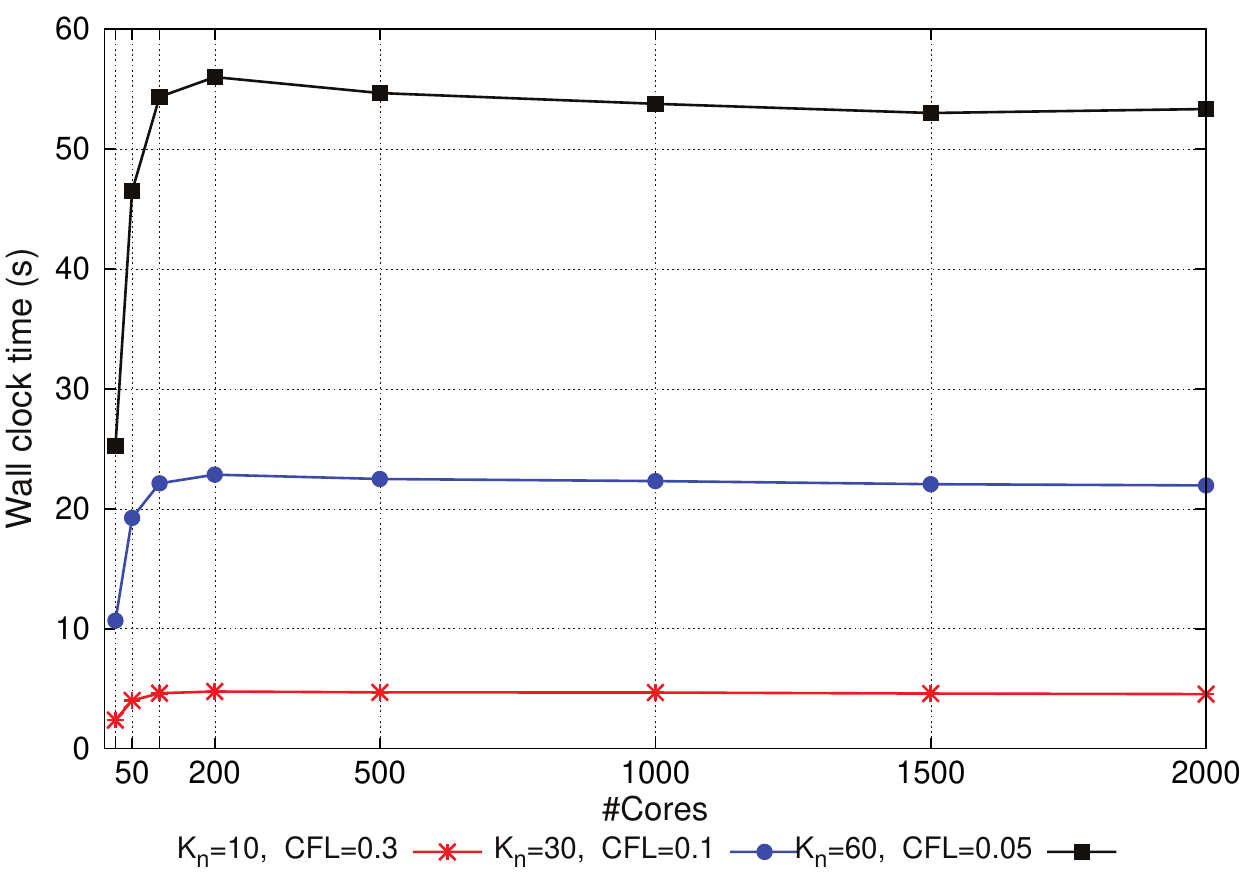} \\
\end{tabular}
\end{center}
\caption{Weak scalability for the total elapsed time (right) and number of GMRES iterations (left) of the TBDDC solver in the solution of the 2D CDR equation (boundary layer) on HLRN. Fixed element size to $h=1/30$.}
\label{fig:boundary_layer_hlrn}
\end{figure}

\subsection{ Weak scalability in space-time}\label{sec:numexp-STBDDC}
The STBDDC preconditioner is considered here, for the set of constraints \Eq{constraint_space_xt}-\Eq{coarse-dofs2}. In this case, the spatial domain $\Omega$ is scaled by $P_x$ and $P_y$ in the corresponding directions, where $P_x = P_y$ in all  cases. On the other hand, the time interval $(0,T]$ is scaled by $P_t$, leading to a $P = P_x \times P_y \times P_t$ partition of the scaled space-time domain $P_x \Omega \times P_t (0,T]$. Therefore, the relative weight of the operators is kept constant through a weak scalability analysis. Local problem loads will be given by $\frac{H}{h}$ in space and $K_n = \frac{| \Delta_n |}{| \delta_k |}$ in time, i.e., $(\frac{H}{h})^d \times \frac{| \Delta_n |}{| \delta_k |}$. 


\subsubsection{Space-time Poisson solver}\label{subsec-st_heat_results} Consider the Poisson problem  (Eq. \Eq{cd} with $\beta = (0,0)$ and $\sigma = 0$) with $\nu=1$, $f=1$, and homogeneous Dirichlet boundary conditions and zero initial condition. The original spatial domain is $\Omega =[0,1]^2$ while the original time domain is $(0,0.1]$.  

\begin{figure}[h!]
\begin{center}
\begin{tabular}{@{}c@{}c@{}}
\includegraphics[width=0.47\textwidth]{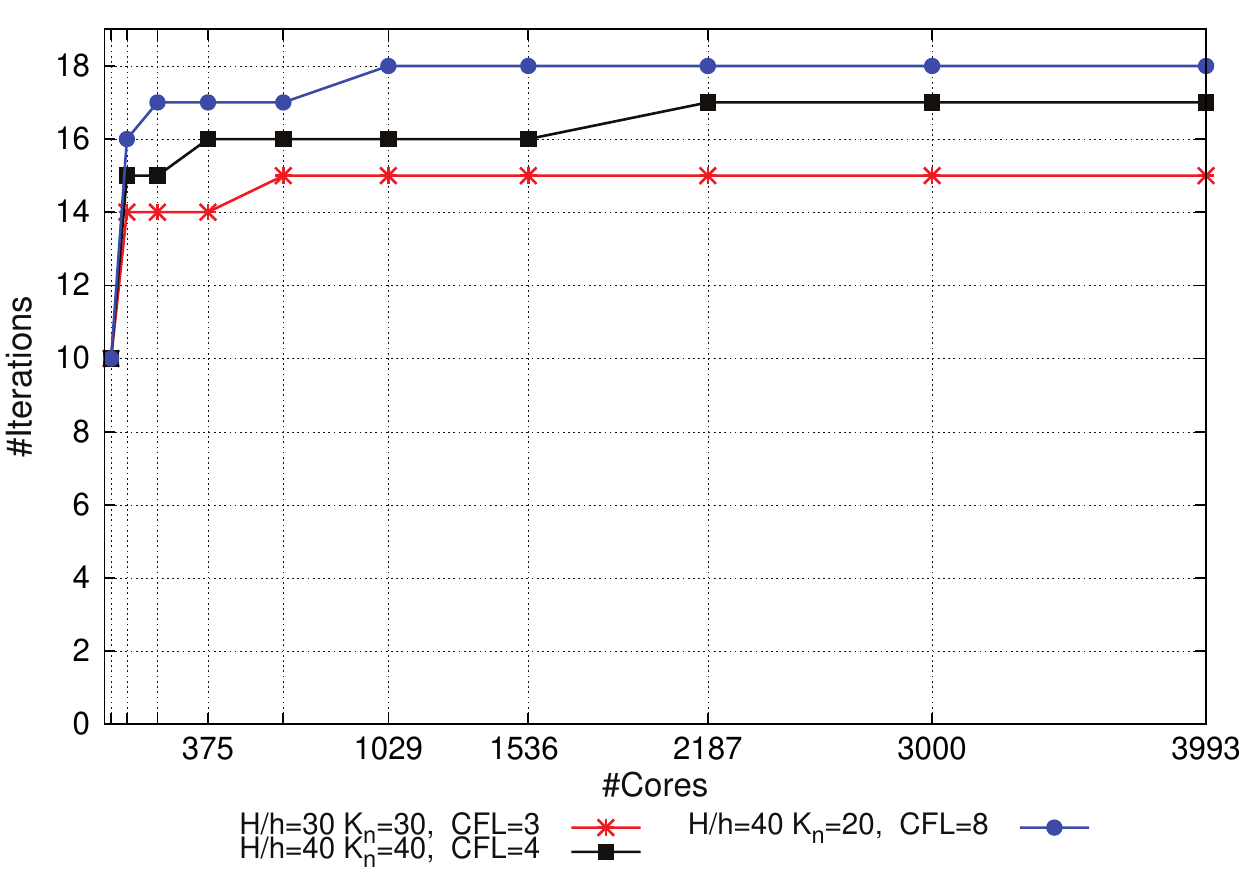} &
\includegraphics[width=0.47\textwidth]{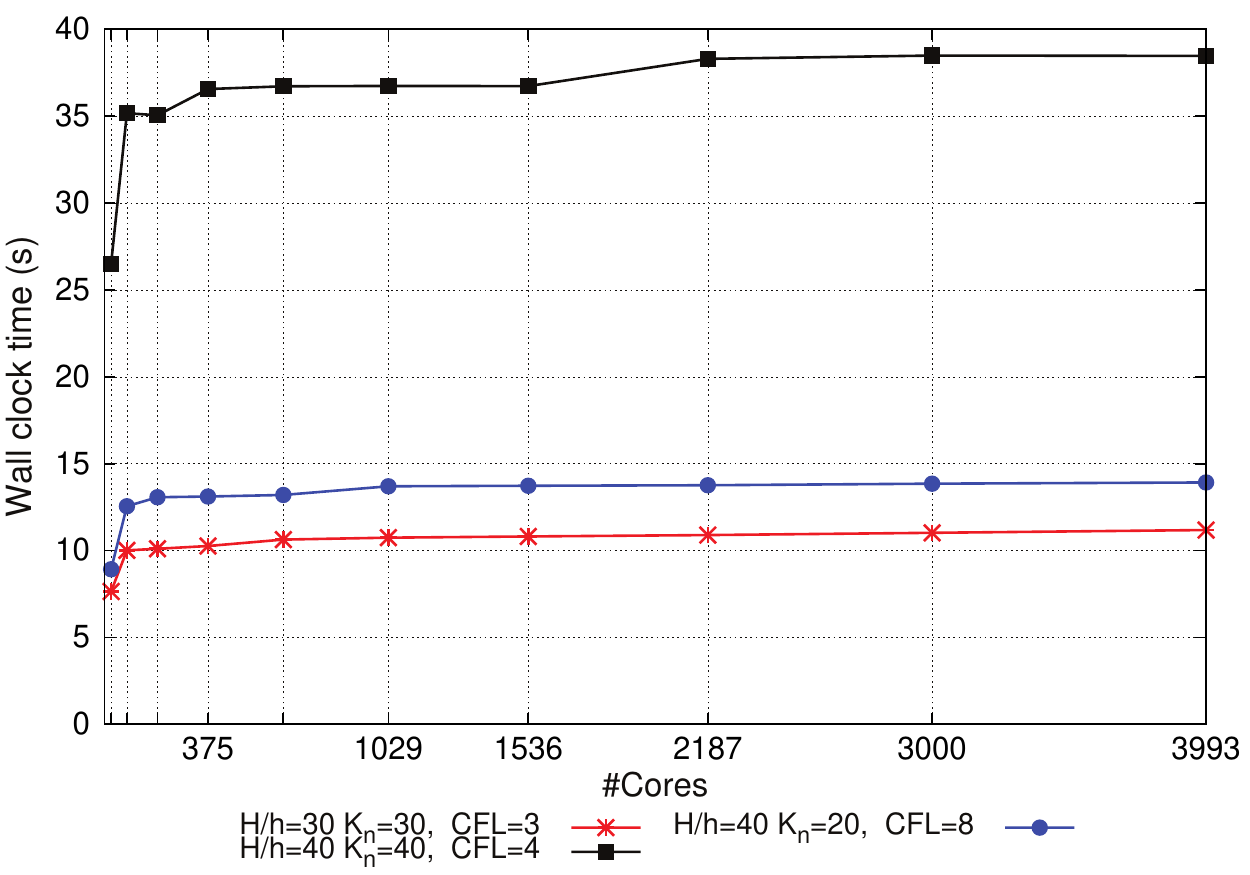} \\
\multicolumn{2}{c}{(a) Iteration counter and wall clock times for $P_t=3P_x$ } \\
\includegraphics[width=0.47\textwidth]{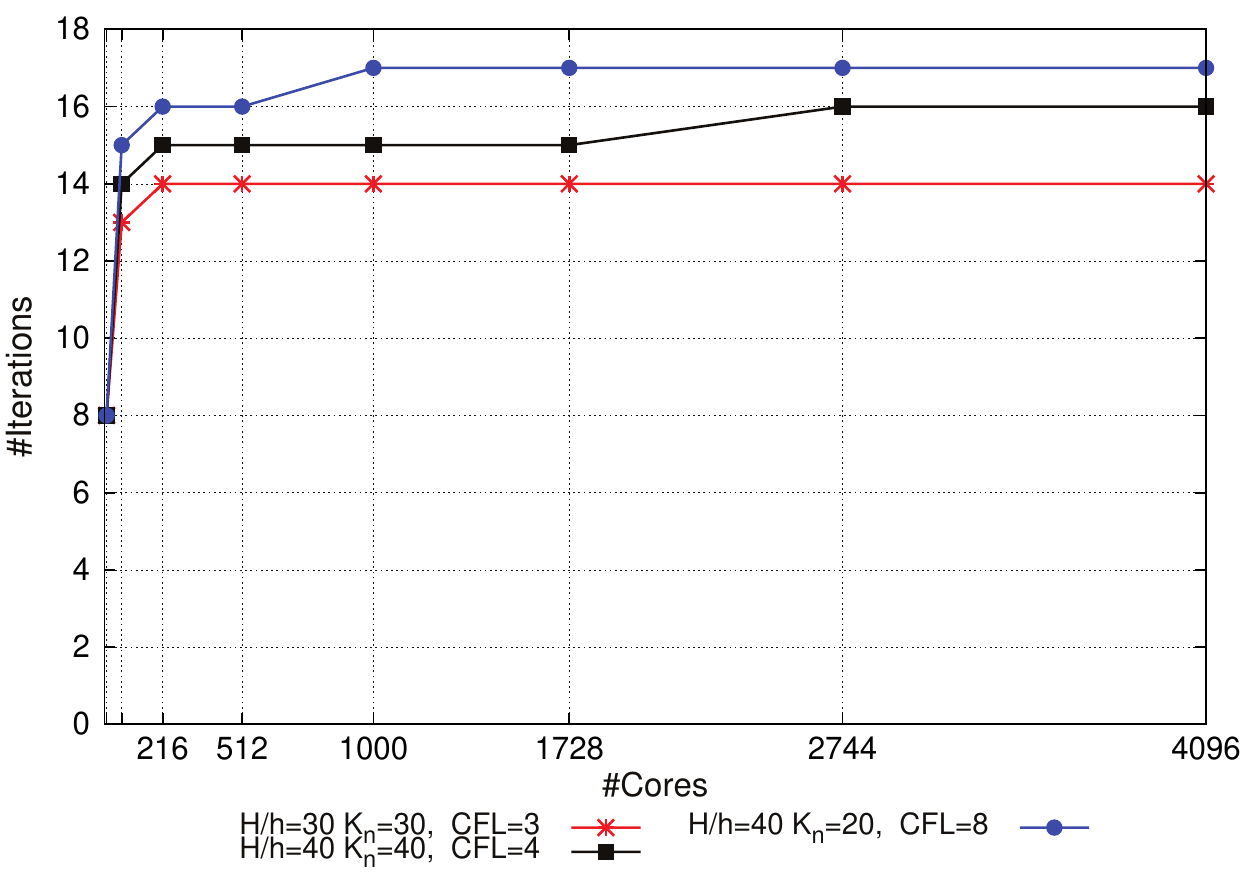} &
\includegraphics[width=0.47\textwidth]{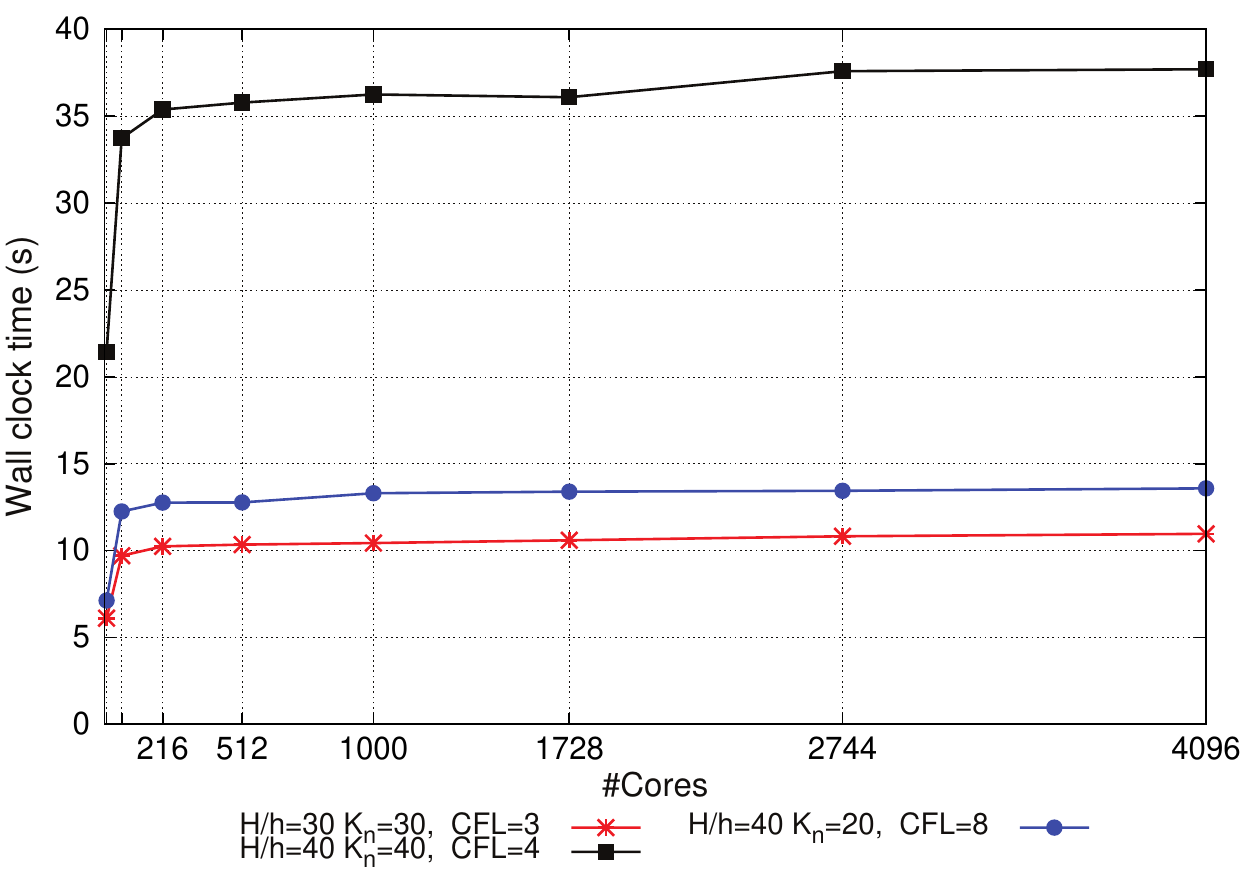} \\
\multicolumn{2}{c}{(b) Iteration counter and wall clock times for $P_t=P_x$ } \\
\includegraphics[width=0.47\textwidth]{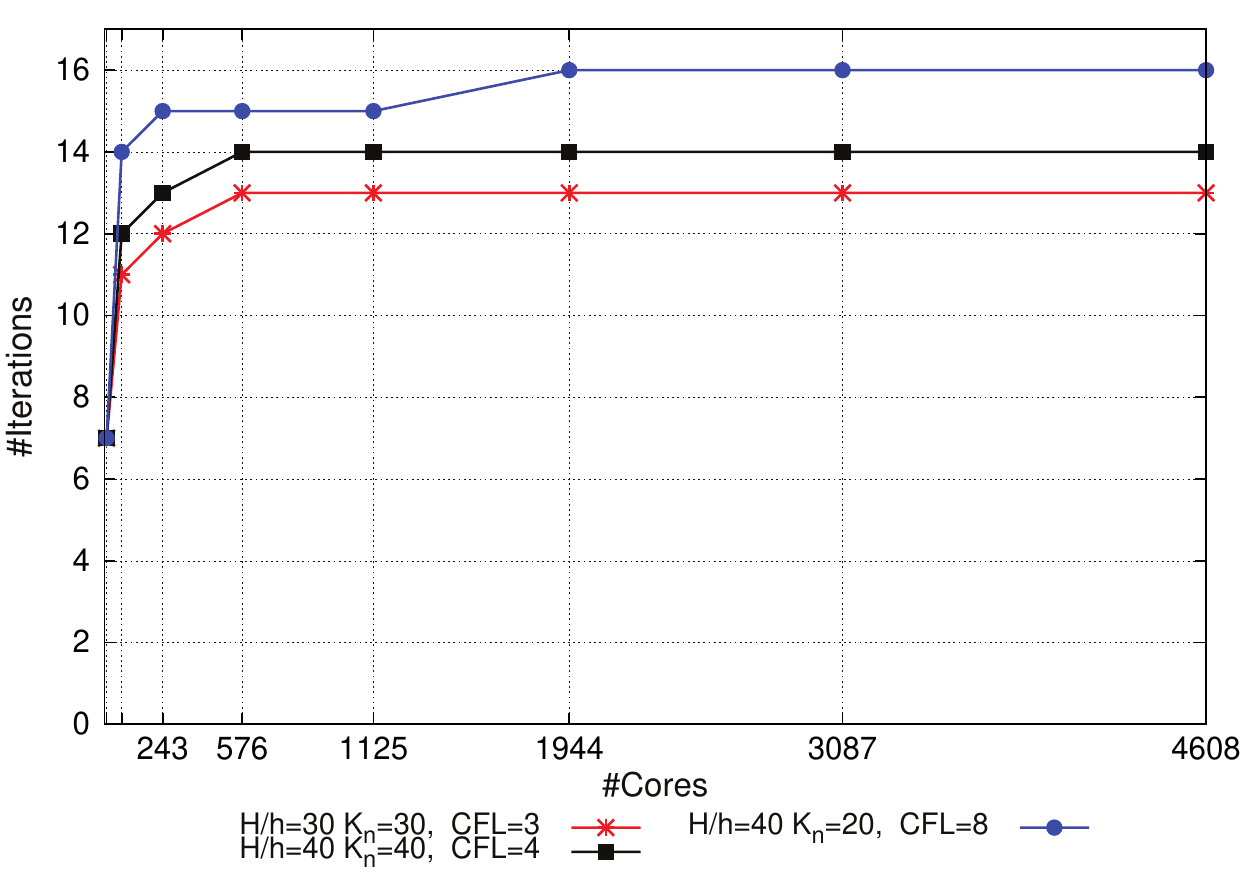} &
\includegraphics[width=0.47\textwidth]{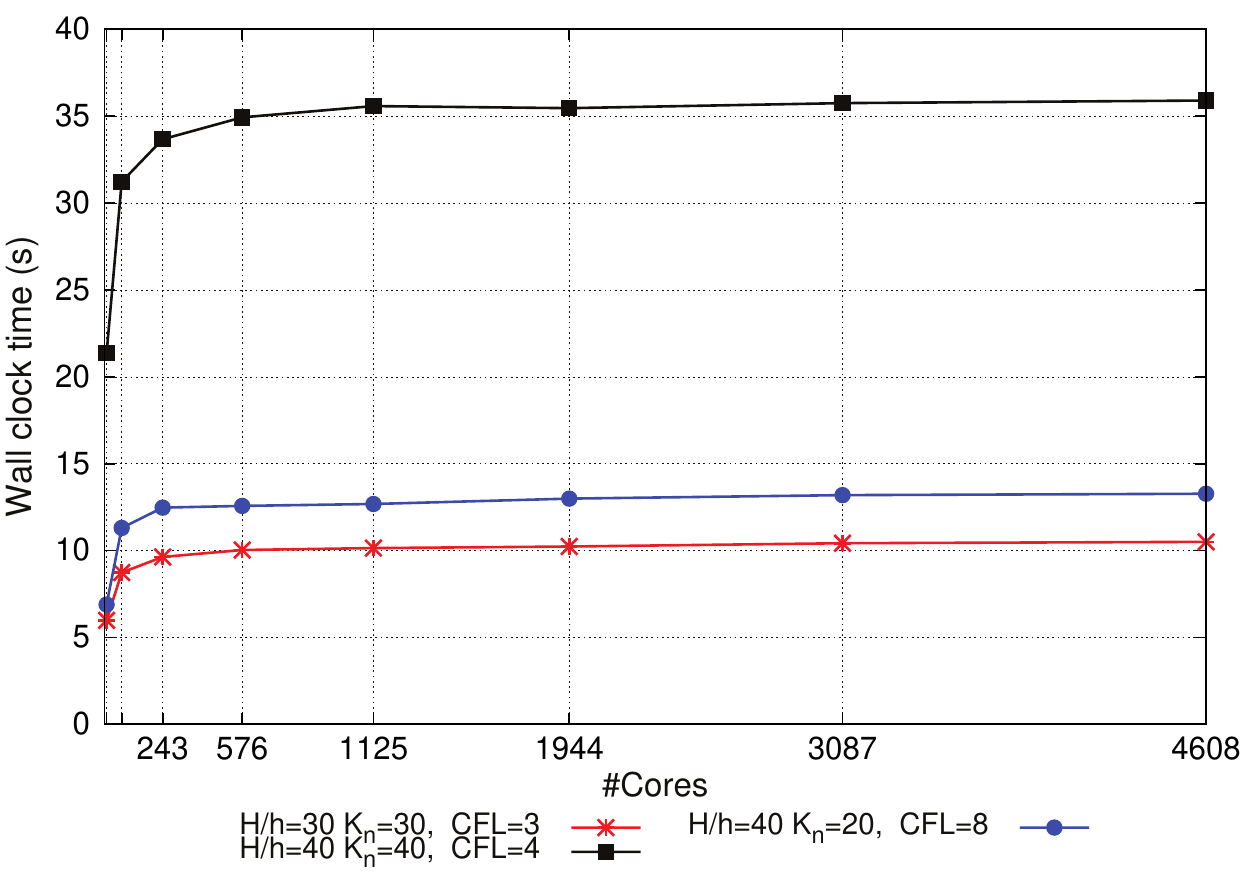} \\
\multicolumn{2}{c}{(c) Iteration counter and wall clock times for $P_x=3P_t$ } \\
\end{tabular}
\end{center}
\caption{Weak scalability for the total elapsed time (right) and number of GMRES iterations (left) of the STBDDC solver in the solution of the 2D Poisson problem on HLRN. }
\label{fig:heat_stbddc_hlrn}
\end{figure}

\begin{figure}[h!]
\begin{center}
\begin{tabular}{@{}c@{}c@{}}
\includegraphics[width=0.47\textwidth]{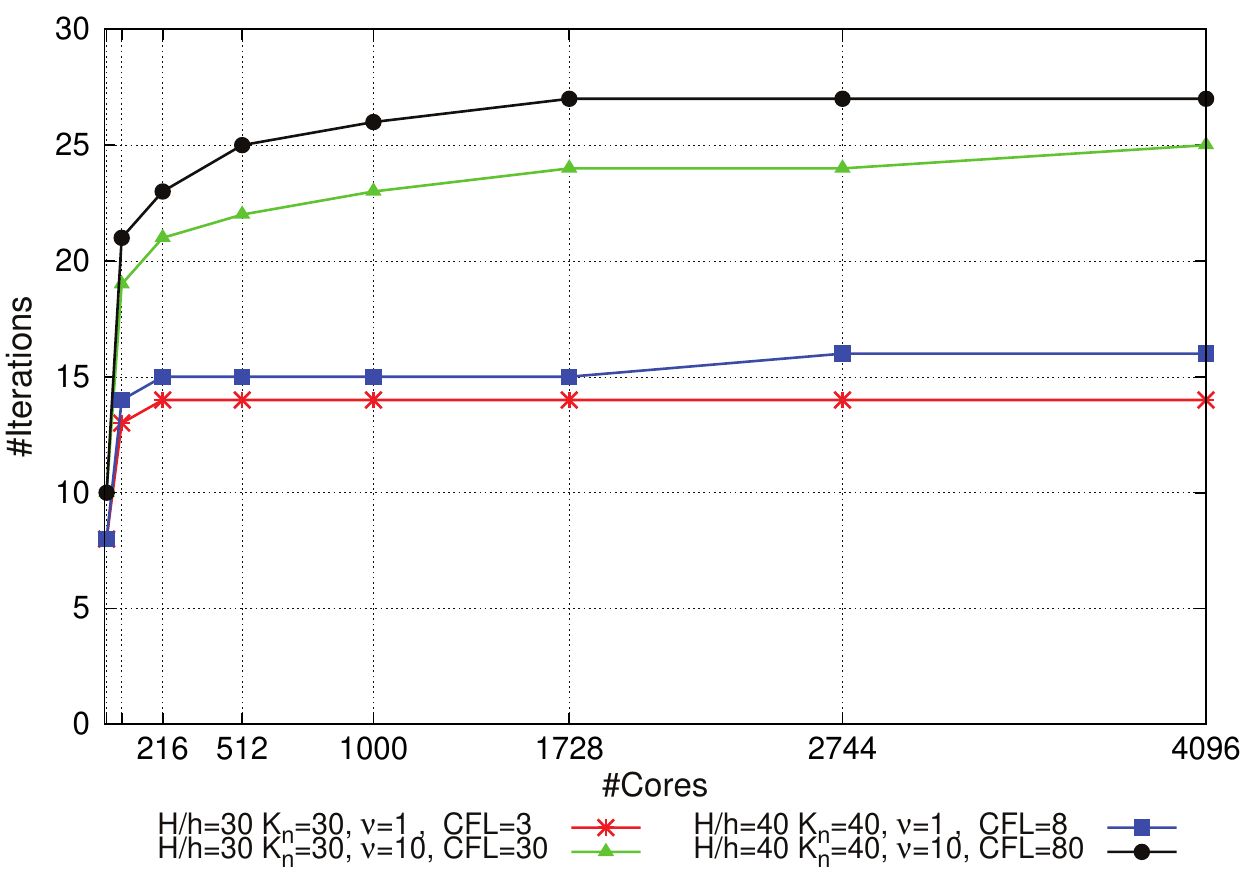} &
\includegraphics[width=0.47\textwidth]{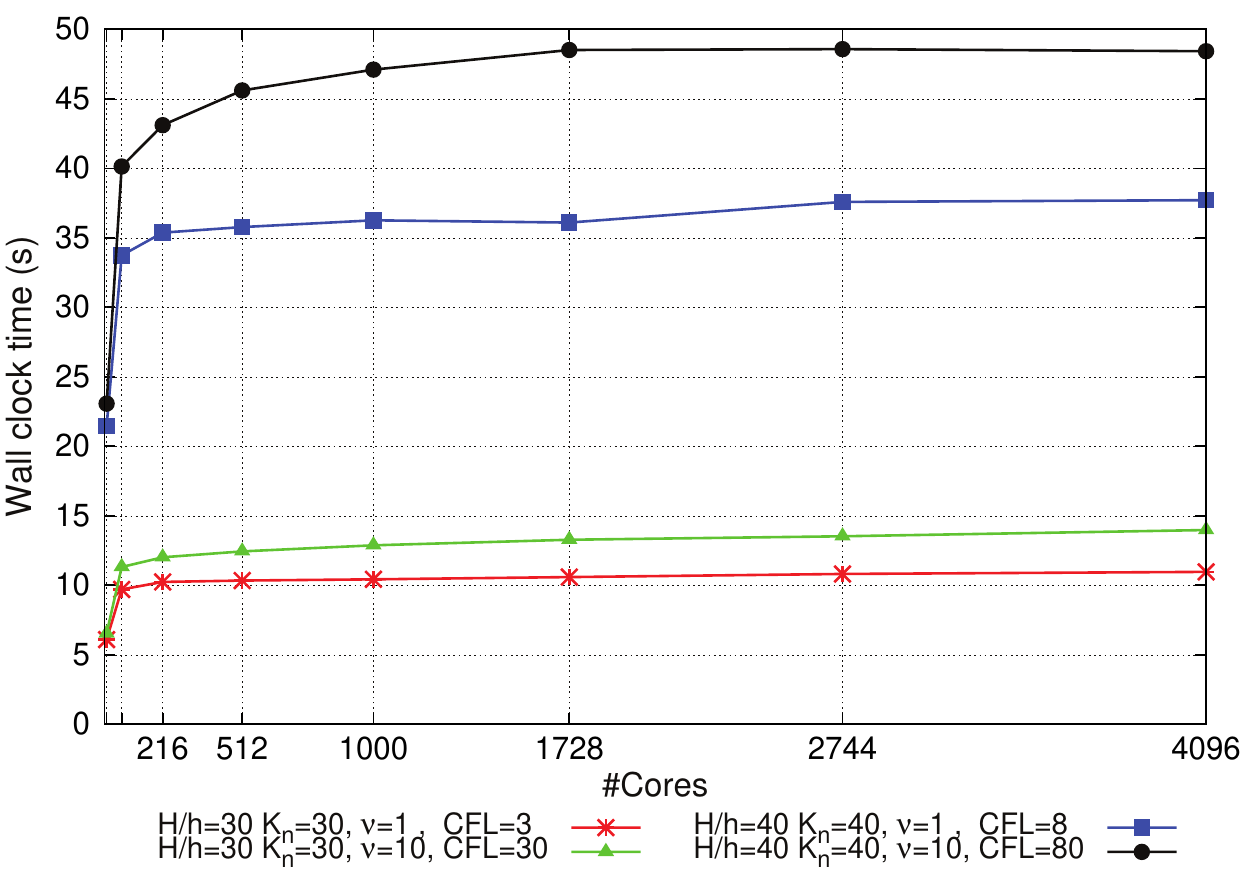} \\
\end{tabular}
\end{center}
\caption{Weak scalability for the total elapsed time (right) and number of GMRES iterations (left) of the STBDDC solver in the solution of the 2D Poisson problem on HLRN. Partition is done equally in time and space, i.e., $P_x=P_t$. }
\label{fig:heat_stbddc_nu}
\end{figure}

Fig. \ref{fig:heat_stbddc_hlrn} shows weak scalability results for the STBDDC-GMRES solver. Number of iterations (left) and {total elapsed times} (right) have been reported for three different ratios between spatial and time partitions $\frac{P_x}{P_t}$ in \ref{fig:heat_stbddc_hlrn}(a), \ref{fig:heat_stbddc_hlrn}(b), and  \ref{fig:heat_stbddc_hlrn}(c). Also, at every figure, three different local problem sizes, and thus {diffusive} CFLs, have been considered.

The most salient information from these figures is the fact that the scheme is also scalable in space-time simulations. Here, one is not only solving a larger problem in time (as above) but also in space. This result is not surprising since the spatial BDDC preconditioner is known to be weakly scalable and the time-parallel version has also been proved to be weakly scalable in Sect. \ref{sec:numexp_TBDDC}. The influence of $\frac{P_x}{P_t}$ on the number of iterations is very mild; also the effect of the {diffusive} CFL is mild in this case. The overlapping strategy is fully effective in the range under consideration, because perfect weak scalability can be observed. The effect of the {diffusive} CFL for a fixed local problem size, obtained by multiplying by 10 the viscosity, is reported in Fig. \ref{fig:heat_stbddc_nu}. In this case, a larger {diffusive} CFL leads to more iterations but weak scalability is also achieved.

Next, we want to compare the space-time solver against a sequential-in-time approach. We fix the time step size to $ | \elt | =10^{-3} $ and $\frac{H}{h}=30$. Thus, the time interval is $T = K | \elt |$, when considering $K$ time steps. The sequential approach makes use of $P_{s} = P_x \times P_y = 4^2$ processors (space-parallelism only) to solve recursively the spatial problem for increasing values of $K$. 
The space-time approach is using $P_{s} \times P_t = 4^2 P_t$ processors to solve the same problem, with a local number of time steps $K_n=10$. The motivation for such analysis is to assess the benefit of time-parallelism in linear problems when spatial parallelism cannot be further exploited efficiently due to very low load per processor. 
\begin{figure}[h!]
\begin{center}
\begin{tabular}{@{}c@{}c@{}}
\includegraphics[width=0.47\textwidth]{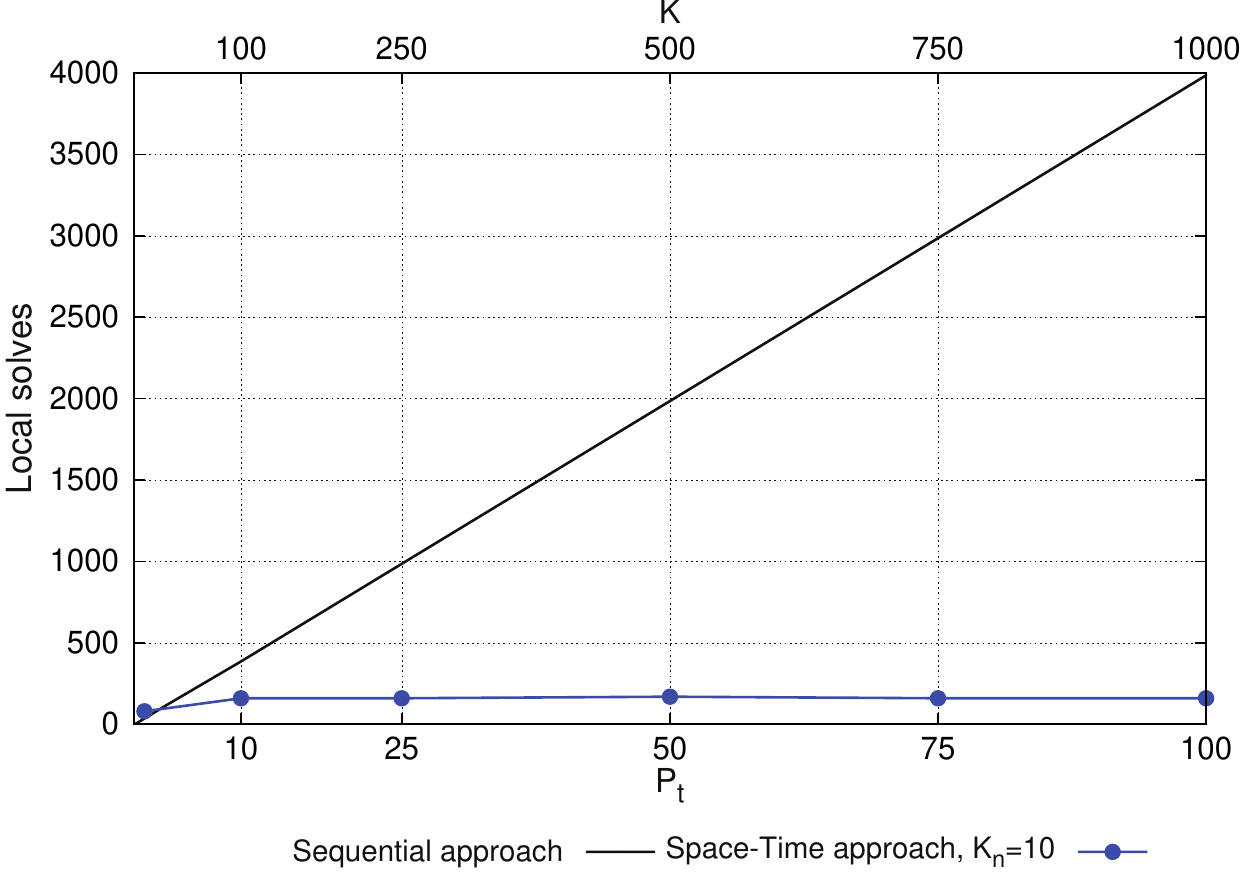} &
\includegraphics[width=0.47\textwidth]{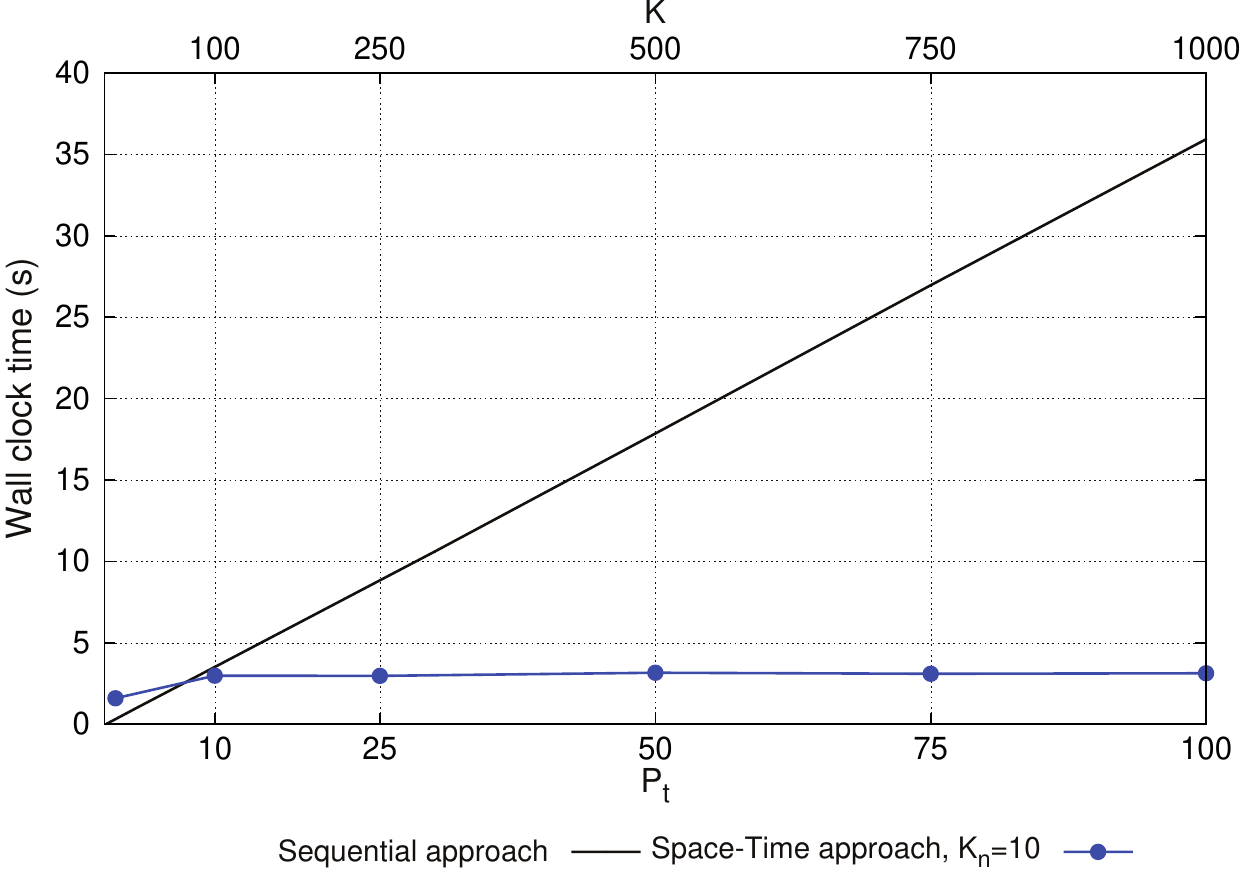} \\
\end{tabular}
\end{center}
\caption{ Comparison between the sequential and space-time solvers for the transient Poisson problem on MareNostrum supercomputer.  Spatial partition is fixed to $P_x=P_y=4$. The space-time approach is using $P=4^2 P_t$ MPI fine-level tasks. }
\label{fig:laplacian_st_vs_eq}
\end{figure}

Figure \ref{fig:laplacian_st_vs_eq} shows the comparison between the sequential approach  and the STBDDC preconditioned space-time solver up to $K=1000$ time steps. The theoretical cost of the sequential approach is proportional to $K$ steps times the {elapsed} time of the $A^{-1}_\omega$ local solve (a preconditioned GMRES iteration). On the other hand, the theoretical cost of the space-time solver is proportional to the cost of the $\boldsymbol{A}^{-1}_\sbxt$ local space-time solves, and in turn, $K_n$ times spatial $A^{-1}_\omega$ local solves (exploiting locally sequentiality in time). The number of local solves is plotted for both sequential and space-time approaches in Fig. \ref{fig:laplacian_st_vs_eq}(a). The sequential approach shows a linear growth of the computing time, as expected, since it is solving $K$ times problems in a sequential fashion. Since the current implementation of space-time preconditioners in FEMPAR does not exploit local sequentiality in time, we observe a discrepancy between the intersection of curves in terms of local solves and {elapsed} time, due to the quadratic complexity of sparse direct methods. In any case, the space-time approach starts to be competitive with less than 10 time partitions. Out of these plots, we are able to reduce the time-to-solution of simulations with the space-time approach by adding more processors to exploit time-parallelism, both for the linear and nonliner problems considered herein. Besides, the method shows excellent weak scalability properties.

\subsubsection{Space-time CDR solver} Consider the CDR equation \Eq{cd} with $\sigma=10^{-4}$, $\beta= (1, 0)$, and $f=1$, on an original domain $\Omega \times (0,T] =[0,0.3]^2 \times (0,0.3]$, and scaled through the weak scalability analysis by $P_x, P_t$ respectively. Homogeneous Dirichlet and initial conditions are enforced. Local problem size is fixed with $K_n=30$ and $\frac{H}{h}=30$. The ratio between spatial and time partition is $\frac{P_x}{P_t}=3$. Several diffusion parameters are considered in order to present different scenarios: from a diffusive-dominated case ($\nu=1.0$) to a convection-dominated one $(\nu=10^{-6})$. We have the convective CFL equal to 1.0 in all cases. SUPG stabilization is again used.

 \begin{table}\centering
\begin{tabular}{|c|c|c|c|c|c|c|c|}
\hline
  \multicolumn{2}{|r}{$\nu=$}                      & $1$  &   $10^{-1}$  &    $10^{-2}$ &$10^{-3}$ &$10^{-4}$&$10^{-6}$\\ \hline 
   \multicolumn{2}{|r}{$\text{CFL}_\nu$=}               & $10^2$ &$10$ &$1$ & $10^{-1}$ & $10^{-2}$ & $10^{-4}$\\ \hline 
   \multicolumn{2}{|r}{P\'eclet$=$}          & $5\cdot10^{-3}$ &$5\cdot10^{-2}$ &$0.5$ & 5 & 50 & $5\cdot10^{3}$  \\ \hline 
 $(P_x \times P_y)\times P_t$ & \#Sbd  & \multicolumn{6}{c|}{}              \\ \hline
 $(3\times3)\times 1$    & 9     &   18   &    11     &    7     &    5    &   5    &    5      \\ \hline
 $(6\times6)\times 2$    & 72    &   28   &    16     &   11     &   11    &   11   &   11      \\ \hline          
 $(9\times9)\times 3$    & 243   &   35   &    17     &   11     &   12    &   12   &   13      \\ \hline 
 $(12\times12)\times 4$  & 576   &   37   &    17     &   12     &   13    &   14   &   15      \\ \hline    
 $(15\times15)\times 5$  & 1125  &   38   &    17     &   13     &   14    &   15   &   17      \\ \hline 
 $(18\times18)\times 6$  & 1944  &   39   &    17     &   13     &   15    &   15   &   18      \\ \hline 
 $(21\times21)\times 7$  & 3087  &   40   &    17     &   14     &   16    &   16   &   19      \\ \hline    
 $(24\times24)\times 8$  & 4608  &   41   &    18     &   14     &   17    &   17   &   21      \\ \hline 
 $(27\times27)\times 9$  & 6561  &   41   &    18     &   15     &   18    &   18   &   22      \\ \hline
 $(30\times30)\times 10$ & 9000  &   42   &    18     &   16     &   19    &   19   &   24        \\ 
 \hline
\end{tabular}
\caption{Iteration count for CDR equation. Convective CFL equal to 1.0 in all cases. CFL$_\nu$ represents the diffusive CFL.}
\label{tab-cdr_stbddc_iters}
\end{table}

Table \ref{tab-cdr_stbddc_iters} presents the iteration count for different diffusion values that lead to different scenarios. In the diffusive-dominated case the STBDDC preconditioned GMRES tends to an asymptotically constant number of iterations, thus independent of the number of subdomains. Moving to the convective case, the number of iterations slightly grows with the decrease of the diffusive CFL number.

\subsection{Nonlinear space-time $p$-Laplacian solver} In this experiment we compare the sequential-in-time method (solving the spatial problem with a BDDC approach for every time step) against the proposed STBDDC solver. We consider the $p$-Laplacian problem (with $p=1$), i.e., \Eq{cd} with $\nu = | \nabla u|$, $\beta ( 0,0)$, and $\sigma = 0$, on $\Omega=[0,1]^2 \times (0, T]$ with the initial solution $u^0 = x+y$, Dirichlet data $g=x+y$, and the forcing term $f=1$. We fix the time step size to $ | \elt | =10^{-3} $ and $\frac{H}{h}=30$. 

We consider the same setting as the experiment reported in Fig. \ref{fig:laplacian_st_vs_eq} to compare the sequential-in-time and space-time parallel solvers for a nonlinear problem. Thus, we consider an increasing number of time steps, and  $P_{s} = P_x \times P_y = 4^2$ processors for the space-time solver. The sequential solver exploits $P_s$ processors to extract space-parallelism only.
\begin{figure}[h!]
\begin{center}
\begin{tabular}{@{}c@{}c@{}}
\includegraphics[width=0.47\textwidth]{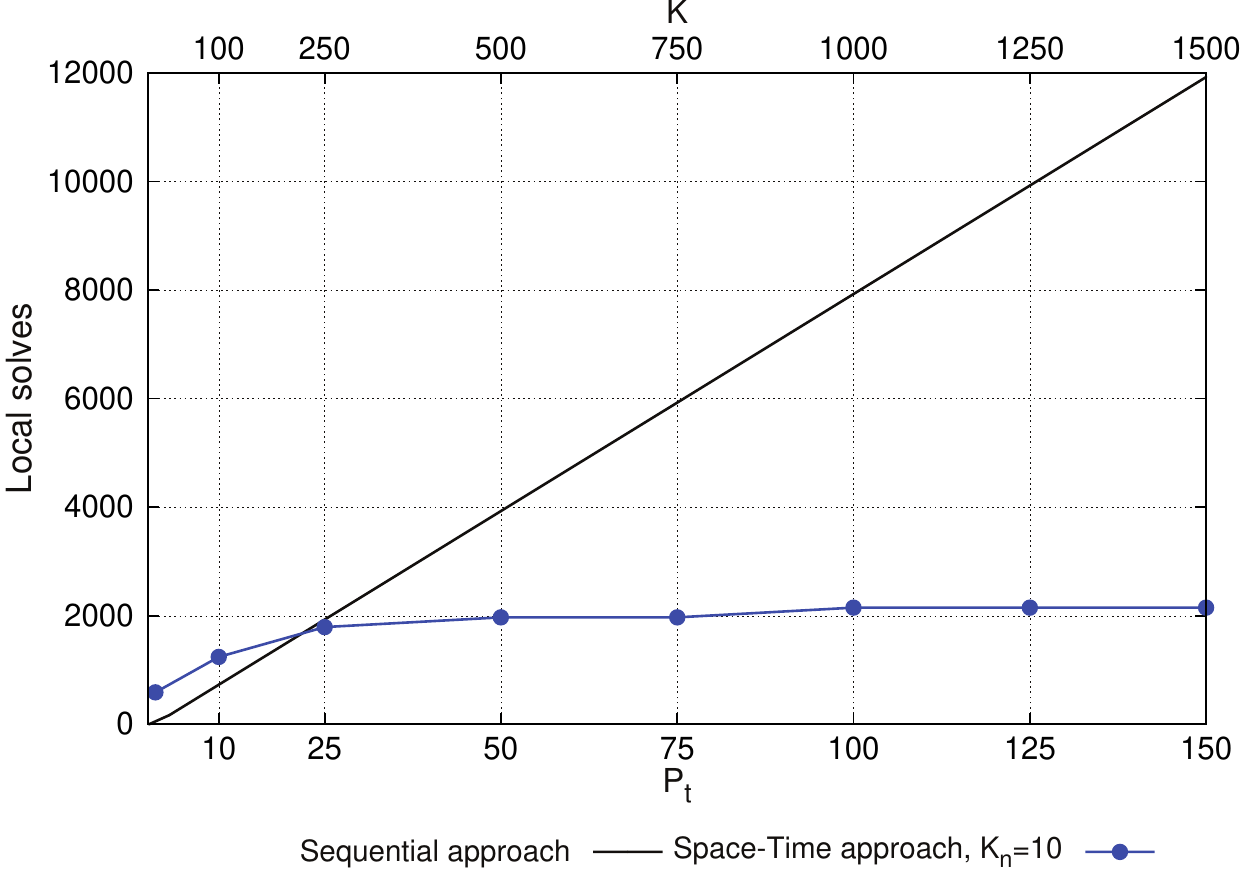} &
\includegraphics[width=0.47\textwidth]{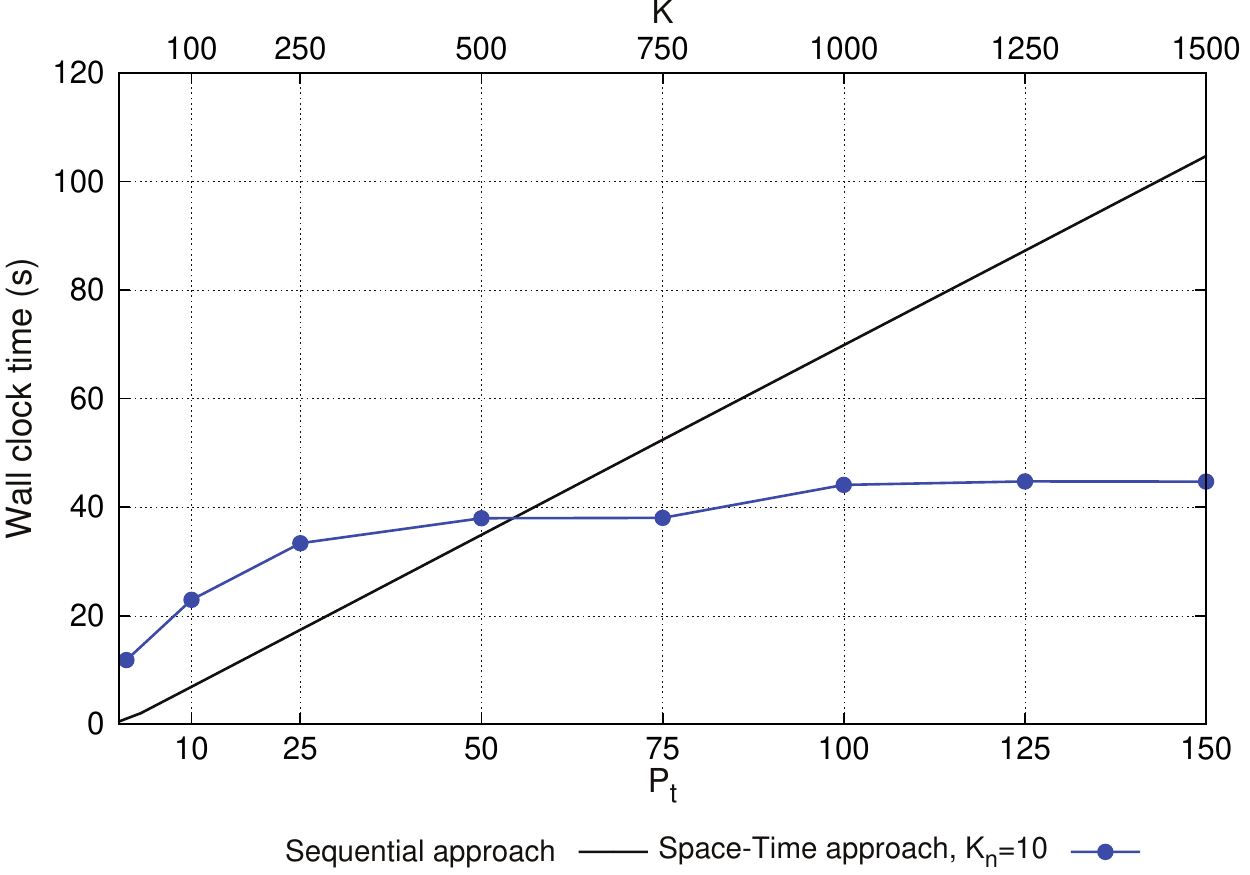} \\
\end{tabular}
\end{center}
\caption{ Comparison between the sequential and space-time solvers for the $p$-Laplacian transient problem on MareNostrum supercomputer. Spatial partition is fixed to $P_x=P_y=4$. The space-time approach is using $P=4^2 P_t$ MPI fine-level tasks. }
\label{fig:p_laplacian}
\end{figure}
Figure \ref{fig:p_laplacian} shows the comparison between the sequential approach up to $T=1.5$ and $K=1500$ time steps.  In Fig. \ref{fig:p_laplacian}(a) we plot the number of space solves, which is now proportional to the number of accumulated linear iterations through the nonlinear iterations. Remarkable algorithmic scalability is also obtained in the nonlinear setting. {Elapsed} time plots in \ref{fig:p_laplacian}(b) show a similar behaviour in the nonlinear case as in the linear one. The nonlinear case also exhibits excellent weak scalability properties. The nonlinear space-time solver is competitive in terms of number of local problems to be computed at about 20 processors, whereas  it requires slightly more than 50 processors in order to be superior in terms of {elapsed} time. Discrepancies should be substantially reduced exploiting causality for the local problems, as commented above. Out of these plots, we are able to reduce the time-to-solution of simulations with the space-time approach by adding more processors to exploit time-parallelism for the nonliner problem considered herein. In any case, there is still room for improvement when considering nonlinear problems. In this sense, nonlinear space-time preconditioning \cite{klawonn_nonlinear_2014,cai_nonlinearly_2002} and more elaborated linearization strategies have the potential to lead to better performance.
 
\section{Conclusions}\label{sec:conclusions}

In this work, we have considered a space-time iterative solver based on DD techniques. In particular, we have considered the GMRES iterative solver with space-time preconditioning obtained by extending the BDDC framework to space-time for parabolic problems discretized with FEs. Since the time direction has a very different nature than the spatial one, i.e., it is a transport-type operator, a particular definition of the coarse DOFs and transfer operators is considered, taking into account time causality. Further, perturbation terms must be included to lead to a well-posed system. The exposition has been carried out for a Backward-Euler time integrator, but the extension to $\theta$-methods and Runge-Kutta methods is straightforward. Further, the well-posedness of the proposed space-time preconditioner has been checked.

On the other hand, we have carried out a detailed set of numerical experiments on parallel platforms. Out of these results, the proposed methodology is observed to exhibit excellent scalability properties. The methods are weakly scalable in time, i.e., increasing X times the number of MPI tasks one can solve X times more time steps, in approximately the same amount of time, which is a key property to reduce time-to-solution in transient simulations with heavy time stepping. We have also shown weak scalability in space-time, where one is not only facing larger problems in time but also in space. Further, we have applied the STBDDC preconditioner to nonlinear problems, by considering a linearization of the full space-time system, and applying the proposed space-time solver at every nonlinear iteration. The use of the space-time solvers proposed becomes faster than a sequential-in-time approach for a modest number of processors.

Future work will include the development of nonlinear space-time BDDC preconditioners, extending the concept of nonlinear preconditioning (see, e.g., the recent work in \cite{klawonn_nonlinear_2014} for nonlinear FETI preconditioners in space) to space-time. As it has already been observed in space \cite{cai_nonlinearly_2002}, the use of nonlinear preconditioning should make space-time preconditioning more effective for nonlinear problems. Further extensions of this work will involve the extension to multilevel space-time algorithms (to keep perfect weak scalability at larger scales), and their application to solid mechanics and incompressible fluid dynamics simulations.

\section*{Acknowledgments}

\bibliographystyle{plain}
\bibliography{refs}

\end{document}